\documentclass[11pt,reqno]{amsart}
\linespread{1.1}

\usepackage{amsfonts,amsmath}
\usepackage{amsthm}
\usepackage{graphicx,enumerate}
\usepackage[hmarginratio=1:1,hscale=0.75]{geometry}
\usepackage[latin1]{inputenc}
\usepackage[all]{xy}

\usepackage{amssymb,mathtools}
\usepackage{stmaryrd}
\usepackage{hyperref}
\usepackage{color}
\usepackage[T1]{fontenc}
\usepackage{lmodern}

\newtheorem{counter}{Counter}
\newtheorem{lem}[counter]{Lemma}
\newtheorem{defn}[counter]{Definition}
\newtheorem{thm}[counter]{Theorem}
\newtheorem{prop}[counter]{Proposition}
\newtheorem{cor}[counter]{Corollary}

\newtheorem{remark}[counter]{Remark}

\newcommand{\R}{\mathbb{R}}

\renewcommand{\L}{\mathcal{L}}

\renewcommand{\lg}{\langle}
\newcommand{\rg}{\rangle} 

\newcommand{\lra}{\longrightarrow}

 \newcommand{\sse}{\subseteq}

 \newcommand{\la}{\lambda}
 
\newcommand{\pd}{\partial}
\newcommand{\rhu}{{\overset{*}{\rightharpoonup}}}

\newcommand{\fal}{\forall}
\newcommand{\8}{\infty}

\newcommand{\vph}{\varphi}
\newcommand{\vep}{\varepsilon} 

 \newcommand{\om}{\Omega}

\newcommand{\gr}{\nabla}

 \renewcommand{\d}{\,\text{d}}

\newcommand{\TD}{\widetilde{V}}
\newcommand{\I}{\om}
\newcommand{\tdV}{\tilde{V}}

\DeclareMathOperator{\supp}{\textnormal{supp}}

\DeclareMathOperator{\argmin}{{\textnormal{argmin}}}


\newcommand{\red}{}
\newcommand{\blue}{}
\definecolor{mypur}{rgb}{0.42, 0,0.12}
\newcommand{\mypur}{}


\begin{document}

\title[Gradient flow approach to an exponential thin film equation]{Gradient flow approach to an exponential thin film equation: global existence and latent singularity}
\author{Yuan Gao}
\address{Department of Mathematics\\
   Hong Kong University of Science and Technology, Clear Water Bay, Kowloon, Hong Kong \&
   Department of Mathematics\\
   Duke University,
  Durham NC 27708, USA
   }
\email{maygao@ust.hk}
\author{Jian-Guo Liu}
\address{Department of Mathematics and Department of
  Physics\\Duke University,
  Durham NC 27708, USA}
\email{jliu@phy.duke.edu}
\author{Xin Yang Lu}
\address{
Department of Mathematical Sciences\\
Lakehead University, Thunder Bay ON P7B5E1, Canada \&
Department of Mathematics and Statistics
\\McGill University,
  Montreal QC H3A0B9, Canada}
\email{xlu8@lakeheadu.ca}

\begin{abstract}
  In this work, we study a fourth order exponential equation, $u_t=\Delta e^{-\Delta u},$ derived from thin film growth on crystal surface in multiple space dimensions. We use the gradient flow method in metric space to characterize the latent singularity in global strong solution, which is intrinsic due to high degeneration. We define a suitable functional, which reveals where the singularity happens, and then prove the variational inequality solution under very weak assumptions for initial data. Moreover, the existence of global strong solution is established with regular initial data.
\end{abstract}

\keywords{Fourth-order exponential parabolic equation, Radon measure, global strong solution, latent singularity, curve of maximal slope}
\subjclass[2010]{35K65, 35R06, 49J40}

\date{\today}

\maketitle

\section{Introduction}

\subsection{Background}
Thin film growth on crystal surface includes kinetic processes by which adatoms detach from above, diffuse on the substrate and then are absorbed at a new position.
These processes drive the morphological changes of crystal surface, which is related to various nanoscale phenomena \cite{SSR2, PimpinelliVillain:98}. Below the roughing temperature, crystal surfaces consist of facets and steps, which are interacting line defects.  At the macroscopic scale, the evoluion of
those interacting line defects is generally formulated as nonlinear PDEs using macroscopic variables; see \cite{Yip2001, our, Kohnbook, Margetis2006, Zang1990, Tang1997, Xiang2002}. Especially from rigorously mathematical level, \cite{Leoni2015, giga2010, our2, ourxu, She2011, LX} focus on the existence, long time behavior, singularity and self-similarity of solutions to various dynamic models under different regimes.

Let us first review the continuum model with respect to the surface height profile $u(t,x)$. Consider the general surface energy,
\begin{equation}
  G(u):=\int_{\Omega} {  (}\beta_1|\nabla u|+\frac{\beta_2}{p}|\nabla u|^p {  )}\d x,
\end{equation}
where $\Omega$ is the ``step locations area'' we {are} concerned {with}. Then the chemical potential $\mu$, defined as the change per atom in
the surface energy, can be expressed as
$$\mu:=\frac{\delta G}{\delta u}=-\nabla\cdot \Big( \beta_1 \frac{\nabla u}{|\nabla u|}+\beta_2|\nabla u|^{p-2}\nabla u\Big). $$

Now by conservation of mass, we write down the evolution equation for surface height of a solid film $u(t,x)$:
$$u_t+\nabla\cdot J=0,$$
where $$J=-M(\nabla u)\nabla\rho_s$$ is the adatom flux by Fick's law \cite{Margetis2006}, the mobility function $M(\nabla u)$ is a functional of the gradients
in $u$  and $ \rho_s$ is the local equilibrium
density of adatoms.
By the Gibbs-Thomson relation \cite{cooper1996, widom1982, Margetis2006}, which is connected to the theory of molecular capillarity, the corresponding local equilibrium
density of adatoms is given by
$$\rho_s=\rho^0 e^{\frac{\mu}{kT}},$$
where $\rho^0$
is a constant
reference density, $T$ is the temperature and $k$ is the Bolzmann constant.

Notice those parameters can be absorbed in the scaling
of the time or spatial variables. The evolution equation for $u$
can be rewritten as
\begin{equation}\label{eqn2}
u_t=\nabla\cdot \Big(M(\nabla u)\nabla e^{\frac{\delta G}{\delta u}}\Big).
\end{equation}

It should be pointed out that in past, the exponential
of $\mu/kT$ is typically linearized under the hypothesis that $|\mu|\ll kT$; see for instant \cite{D22, D24, Shenoy2002} and most rigorous results in \cite{Leoni2015, giga2010, our2, ourxu, She2011, LX} are established for linearized Gibbs-Thomson relation. This simplification, $e^\mu\approx 1+\mu$,
yields the linear Fick's law for the flux $J$ in terms of the  chemical potential
$$J=-M(\nabla u )\nabla \mu.$$
The resulting evolution equation is
\begin{equation}
\frac{\partial u}{\partial t} = \nabla \cdot \left(M(\nabla
u)\nabla\left(\frac{\delta G}{\delta u}\right)\right), \label{dynamicPDE}
\end{equation}
which is widely studied when
the mobility function $M(\nabla u)$ takes distinctive forms in different limiting
regimes. For example, in the diffusion-limited (DL) regime, where the
dynamics is dominated by the diffusion across the terraces and $M$ is
a constant $M \equiv 1$, Giga and Kohn \cite{giga2010} rigorously showed that with periodic
boundary conditions on $u$, finite-time flattening  occurs for $\beta_1\neq 0$.
A heuristic
argument provided by Kohn \cite{Kohnbook} indicates that the flattening
dynamics is
linear in time.  While in the attachment-detachment-limited (ADL) case, i.e.
the dominant processes are the attachment and detachment of atoms at step
edges and the mobility function \cite{Kohnbook} takes the form
$
M(\nabla u) = |\nabla u|^{-1},
$
we refer readers to \cite{Kohnbook, She2011, our2, ourxu} for analytical results.

Note that the simplifed version of PDE \eqref{dynamicPDE}, which  linearizes
 the Gibbs-Thomson relation, does not distinguish
between convex and concave parts of surface profiles. However the convex and concave parts of surface profiles actually have very different dynamic processes due to the exponential effect, which is explained in Section \ref{sec1.2} below; see also numerical simulations in \cite{LLDM}.

Now we consider the original exponential model \eqref{eqn2} in DL regime
 \begin{equation}\label{heq_exp}
\begin{aligned}
  u_t&=\nabla\cdot \Big( \nabla e^{\frac{\delta G}{\delta u}}\Big)
  =\Delta e^{ -\nabla\cdot \Big( |\nabla u|^{p-2}\nabla u\Big)},
  \end{aligned}
\end{equation}
with surface energy $G:=\int_\om \frac{1}{p}|\nabla u|^p\d x$, $p\geq 1$.
 The physical explanation of the $p$-Laplacian surface energy can be found in \cite{MarzuolaWeare2013}.
From the atomistic scale of solid-on-solid (SOS) model, the transitions between atomistic configurations are
determined by  the number of bonds that
each atom would be required to break in order to move.
It worth noting for $p=1$ \cite{LLDM}  developed an explicit
solution to characterize the dynamics of facet position in one dimensional, which is also verified by numerical simulation.

In this work, we focus on the case $p=2$ for high dimensional
 and use the gradient flow approach to study the strong solution with latent singularity to \eqref{heq_exp}. We will see clearly the different performs between convex and concave parts of the surface.
Explicitly,
given $T>0$ and a {\mypur bounded, spatial domain} $\om\sse \R^d$
with smooth boundary,
we consider the evolution problem
\begin{equation}\label{maineq}
\left\{
\begin{array}{cl}
u_t=\Delta e^{-\Delta u} & \text{in } \om\times [0,T],\\
\gr u\cdot \nu = \gr e^{-\Delta u} \cdot \nu =0& \text{on } \pd\om\times [0,T],\\
u(x,0) = u^0(x) & \text{on } \om,
\end{array}
\right.
\end{equation}
where $\nu$ denotes the outer unit normal vector to $\pd\om$.
The main results of this work is to prove the existence of variational inequality solution to \eqref{maineq} under weak assumptions for initial data and also the existence of strong solution  to \eqref{maineq} under strong assumptions for initial data; see Theorem \ref{th-vi} and Theorem \ref{mainth1} separately.

\subsection{Formal observations}\label{sec1.2}
We first show some a-priori estimates to see the mathematical structures of \eqref{maineq}.

On one hand, formally define the beam type free energy $F(u):=\int_\om e^{-\Delta u}\d x$, so we can rewrite the original equation as a gradient flow
  \begin{equation}\label{tm10_6}
    u_t=-\frac{\delta F}{\delta u}=\Delta e^{-\Delta u}
  \end{equation}
  and
  $$F(T)+\int_0^T\int_\om \big|\frac{\delta F}{\delta u}\big|^2 \d x\d t =F(0)$$
  for any $T>0$.

Notice boundary condition $\gr u\cdot \nu=0$. We have
$$\int_\om \Delta u \d x =0,$$
which  gives
\begin{equation}\label{925_01}
\|(\Delta u)^+\|_{L^1(\om)}=\|(\Delta u)^-\|_{L^1(\om)}=\frac{\|\Delta u\|_{L^1(\om)}}{2},
\end{equation}
where $(\Delta u)^+:=\max\{0, \Delta u\}$ is the positive part of $\Delta u$ and $(\Delta u)^-:=-\min\{0, \Delta u\}$ is the negative part of $\Delta u$.
Since
\begin{equation*}
\|(\Delta u)^-\|_{L^1} = \int_\om (\Delta u)^-\d x \le \int_\om e^{(\Delta u)^-}\d x \le \int_\om e^{-(\Delta u)^++(\Delta u)^-} \d x=  F(u) \le F(u^0)<+\8,
\end{equation*}
we know $\|\Delta u\|_{L^1(\om)}\le 2F(u^0)<+\8$. However, since $L^1$ is non-reflexive Banach space, the uniform bound of $L^1$ norm for $\Delta u$ dose not prevent it being a Radon measure. In fact, from $F(u)=\int_\om e^{-\Delta u} \d x$ and \eqref{tm10_6}, we can see a positive singularity in $\Delta u$ should be allowed for the dynamic model; also see an example in \cite[p.6]{LX} for a stationary solution with singularity.
We will introduce the latent singularity in $(\Delta u)^+$ officially in Section \ref{sec2.1}.

On the other hand, since
\begin{align*}
  \frac{1}{2}\frac{\d}{\d t}\int_\om u_t^2 \d x=\int_\om u_t (\Delta e^{-\Delta u})_t \d x=\int_\om \Delta u_t (e^{-\Delta u})_t \d x=\int_\om -(\Delta u_t)^2e^{-\Delta u}\d x\leq 0,
\end{align*}
we have high order a-priori estimate $$\int_\om u_t^2\d x=\int_\om (\Delta e^{-\Delta u})^2 \d x\leq C(u^0),$$
where $C(u^0)$ is a constant depending only on $u^0;$ see also \cite{LX}.
{\blue Noticing $F(t)=\int_{\Omega}e^{-\Delta u}\d x\leq C(u^0)$, from Poinc\'are's inequality, Young's inequality and the boundary condition $\gr e^{-\Delta u} \cdot \nu =0$, we have
\begin{equation}\label{tm28_01}
\begin{aligned}
\int_\om |e^{-\Delta u}|^2 \d x &\leq c\int_\om |\gr e^{-\Delta u}|^2 \d x +C(u^0) \\
&= c\int_\om - e^{-\Delta u} \Delta e^{-\Delta u}\d x +C(u^0)\\
&\leq \frac{1}{2}\int_\om |e^{-\Delta u}|^2 \d x+ c\int_\om |\Delta e^{-\Delta u}|^2 \d x+C(u^0),
\end{aligned}
\end{equation}
where $c$ is a general constant changing from line to line.
Hence we know
$$\int_\om |e^{-\Delta u}|^2 \d x \leq  c\int_\om |\Delta e^{-\Delta u}|^2 \d x+C(u^0).$$
}
Then by \cite[Lemma 1]{LX}, we have
\begin{equation}
\int_\om |D^2 e^{-\Delta u}|^2 \d x\leq c \int_\om(\Delta e^{-\Delta u})^2 \d x+C(u^0)\leq C(u^0).
\end{equation}
This, together with \eqref{tm28_01}, implies
\begin{equation}\label{0612_1}
\|e^{-\Delta u}\|_{H^2(\om)}\leq C_1(u^0).
\end{equation}
 Although these are formal observations for now, later we will prove them rigorously except for \eqref{0612_1}, which used formal boundary condition $\gr e^{-\Delta u} \cdot \nu =0$.

\subsection{Overview of our method and related method}
Although from formal observations in Section \ref{sec1.2} the original problem can be recast as a standard gradient flow, the main difficulty is how to characterize the latent singularity in $(\Delta u)^+$ and  choose a natural working space.

As we explained before, the possible existence of singular part for $\Delta u$ is intrinsic, so the best regularity we can expect for $\Delta u$ is Radon measure space. To get the uniform bound of $\|\Delta u\|_{\mathcal{M}(\om )}$, we need to first construct an invariant ball, which is the indicator functional $\psi$ defined in \eqref{psi}, then get rid of $\psi$ after we obtain the variational inequality solution; see Theorem \ref{th-vi} and Corollary \ref{cor10}. After we choose the working space $\mathcal{M}(\om )$ for $\Delta u$, we can define the energy functional $\phi$ rigorously in \eqref{phi} using Lebesgue decomposition. Using the gradient flow approach in metric space introduced by \cite{AGS}, we consider a curve of maximal slope of the energy functional $\phi+\psi$ and try to gain the evolution variational inequality (EVI) solution defined in Definition \ref{defweak} under weak assumptions for the initial data following \cite[Theorem~4.0.4]{AGS}. However, since the functional $\phi$ is defined only on the absolutely continuous part of $\Delta u$, it is not easy to verify the lower semi-continuity and convexity of $\phi$, which is developed in Section \ref{sec2.3} and Section \ref{sec2.4}. Finally, when the initial data have enough regularities, we prove the variational inequality solution has higher regularities and is also strong solution to \eqref{maineq} defined in Definition \ref{defstrong}.
{\mypur
We remark that the gradient flow in metric space is consistent with classical setting of gradient flow in Hilbert space. An alternative approach to study EVI solution is to use classical well-posednees theory for m-accretive operator in Hilbert Space; see for instant Theorem 3.1 in \cite{Brezis1973} or Theorem 4.5 in \cite{Barbu2010}.
However, to gain potential generalization to general energy, we ignore the Banach space structure and use the framework for gradient flow in metric space introduced by \cite{AGS}, which contains more understandings.
}

Recently, \cite{LX} also studies the same problem \eqref{maineq} using the method of approximating solutions. Their method based on carefully chosen regularization, which is delicate but the construction is subtle to reveal the mathematical structure of our problem. Instead, our method using gradient flow structure is natural and more general, which is flexible to wide classes of dynamic systems with latent singularity. When proving the variational inequality solution to \eqref{maineq}, we also provide an additional understanding for the evolution of thin film growth, i.e., the solution $u$ is a curve of maximal slope of the well-defined energy functional $\phi+\psi$; see Definition \ref{msdef}.

The rest of this work is devoted to first introduce the abstract setup of our problem in Section \ref{sec2.1} and Section \ref{sec2.2}. Then in Section \ref{sec2.3}, \ref{sec2.4} and Section \ref{sec2.5}, we prove the variational inequality solution following \cite[Theorem~4.0.4]{AGS}. In Section \ref{sec3}, under more assumptions on initial data, we finally obtain the strong solution to \eqref{maineq}.

\section{Gradient flow approach and variational inequality solution}
\subsection{Preliminaries}\label{sec2.1}
We first introduce the spaces we will work in. Since we are not expecting classical solution to \eqref{maineq}, the boundary condition in \eqref{maineq} can not be recovered exactly. Instead, we equip the boundary condition in the space $H, \TD$ defined blow.

Let
\begin{equation}\label{Hnote}
H:=\left\{u\in L^2(\om):\int_\om u\d x=0\right\},
\end{equation}
endowed with the standard scalar product $\lg u,v\rg_H:=\int_\om uv \d x$.

{ Since $L^1$ is not reflexive Banach space and has no weak compactness, those a-priori estimates in Section \ref{sec1.2} can not guarantee the $W^{2,1}(\om)$-regularity of solutions to \eqref{maineq}.  Hence we define the space $\TD$ as follows. {\blue Denote $\mathcal{M}$ as the space of finite signed Radon measures and $C_b(\om)$ is all the bounded continuous functions on $\om $. Denote $\|\cdot\|_{\mathcal{M}(\om)}$ the {total variation} of the measure. Take $d<p<\infty$, $\frac{1}{p}+\frac{1}{q}=1$. Define Banach space
\begin{equation}
\TD:= \{u\in H;\, \gr u\in L^q(\om), \,\Delta u\in \mathcal{M}(\om),\ \int_\om \vph\d(\Delta u  )=-\int_\om\gr u \cdot\gr \vph \d x \text{ for any }\vph\in W^{1,p}(\om)\}.
\end{equation}
   Endow $\TD  $
with the norm
\begin{equation}
\|u\|_{\TD  }:=\|u\|_{L^2(\om)}+\|\Delta u\|_{\mathcal{M}(\om)}.
\end{equation}
Next, we claim the norm is equivalent to $\|u\|_{L^2(\om)}+\|\gr u\|_{L^q(\om)}+\|\Delta u\|_{\mathcal{M}(\om)} $ by proving
\begin{equation}\label{eq11}
\|\gr u\|_{L^q(\om)}\leq c\|\Delta u\|_{\mathcal{M}(\om)}.
\end{equation}

Indeed, it is obvious when $d=1$ and we will prove it for $d \geq 2$. For $d<p<\infty$, $\frac{1}{p}+\frac{1}{q}=1$, we have $W^{1,p}(\om)\hookrightarrow C_b(\om).$ Noticing the Helmholtz-Weyl decomposition in \cite[Theorem III.1.2 and Lemma III.1.2]{HMNS}, we know
for any vector function $w\in L^p(\om)$ we have the Helmholtz-Weyl decomposition $w=\mathcal{P}w+\nabla \phi$ such that $\int_\om  \mathcal{P}w \cdot \gr v \d x=0$ for any $v\in W^{1,q}(\om)$, $\nabla \phi\in L^p(\om)$ and $\|\mathcal{P}w\|_{L^p}\leq C(p,\om)\|w\|_{L^p}.$ Hence for such $\phi$ and any $u\in \tilde{V}$, we know
\begin{equation}
\int_\om \phi d(\Delta u)= - \int_\om \gr \phi \cdot\gr u \d x =\int_\om (\mathcal{P}w-w)\cdot \gr u \d x=-\int_\om w \cdot \gr u \d x.
\end{equation}
Noticing also
$$\|\gr \phi\|_{L^p}\leq \|w\|_{L^p}+\|\mathcal{P}w\|_{L^p}\leq C(p,\om)\|w\|_{L^p},$$
 we can obtain \eqref{eq11} by
\begin{align*}
\|\gr u\|_{L^q}&\leq \sup_{w\in L^p} \frac{|\langle w,  \gr u\rangle|}{\|w\|_{L^p}}
=\sup_{w\in L^p} \frac{| \int_\om \phi d(\Delta u) |}{\|w\|_{L^p}}\\
&\leq \sup_{w\in L^p} \frac{ \|\phi\|_{L^\8}\|\Delta u\|_{\mathcal{M}} }{\|w\|_{L^p}}
\leq \sup_{w\in L^p} \frac{ \|\gr \phi\|_{L^p}\|\Delta u\|_{\mathcal{M}} }{\|w\|_{L^p}}\\
&\leq c  \|\Delta u\|_{\mathcal{M}}.
\end{align*}
}

Next, since $\Delta u$ can be a Radon measure, we need to make those formal observations in Section \ref{sec1.2} rigorous.
For any  $\mu\in \mathcal{M} ,$ from \cite[p.42]{evans1992}, we have the decomposition
\begin{equation}\label{decom}
  \mu=\mu_{\|} +\mu_{\bot}
\end{equation}
with respect to the Lebesgue measure,
where $\mu_{\|} \in L^1(\I)$ is the absolutely continuous part of $\mu$ and $\mu_{\bot}$ is the singular part, i.e., the support of $\mu_{\bot}$ has Lebesgue measure zero.
Define the beam type functional
\begin{equation}\label{phi}
\phi:H\lra [0,+\8],\qquad \phi(u):=
\begin{cases}
\int_\om e^{-(\Delta u)_{\|}^++(\Delta u)^-} \d x , & \text{if } u\in \TD {\blue \text{ and } (\Delta u)^-\ll\L^d,}\\
+\8 &\text{otherwise},
\end{cases}
\end{equation}
where $(\Delta u)_{\|}$ denotes the absolutely continuous part of $\Delta u$, {\blue $(\Delta u)^-$ is the negative part of $\Delta u$ and $(\Delta u)^+$ is the positive part of $\Delta u$ such that $(\Delta u)^{\pm} $ are two non-negative measures such that $\Delta u= (\Delta u)^+ -(\Delta u)^-.$} We call the singular part  $(\Delta u)^+_{\bot}$ latent singularity in solution $u$.
\begin{remark}
 Although the singularity vanishes in the energy functional $\phi$, it is not a removable singularity in the dynamics. Indeed, noticing the boundary condition,  we can not recover a new solution $v$ by removing the singularity such that $\Delta v=(\Delta u)^+_{\|}-(\Delta u)^-$ and $v_t=\Delta e^{-\Delta v}$. So the singularity in solution $(\Delta u)^+_{\bot}$ actually have effect on $u_t$ and we refer it  as latent singularity.
\end{remark}

{\mypur
An alternative definition and some useful properties for convex functional of measures can be found in \cite{DeTe1984, GoSe1964}.
We claim in the following lemma that the definition using duality for convex functional of measures is equivalent to our definition \eqref{phi} if $\Delta u$ is bounded from below. However, we only prove that $(\Delta u)^-\ll\mathcal{L}^d$ and do not have a lower bound for $\Delta u$. Therefore we prefer the current definition \eqref{phi}, which is defined only on the absolutely continuous part of $\Delta u.$

Recall the conjugate convex function of $f(x):=e^{-x}$ for $x\geq 0$ is
$$f^*(y)=\sup_{x\geq 0}(xy-f(x))=xy-f(x)\big|_{x=-\ln(-y)}=y-y \ln (-y) , \quad -1\leq y\leq 0.$$
Given some positive measure $\mu$, define the convex functional of $\mu$
\begin{equation}
\phi_1(\mu):=\sup_{-1\leq\vph\leq 0, \vph\in C_c^\8(\om)} \left\{\int_\om \vph \d \mu -\int_{\om} f^*(\vph) \d x \right\},
\end{equation}
where $f^*(y)=y-y\ln(-y), \,-1\leq y\leq 0$.

\begin{lem}\label{lemequi}
Assume $\mu\in \mathcal{M}^+(\om)$, $\mu_\|$ (resp. $\mu_\bot$) is the absolutely continuous part (resp. the singular part) of $\mu$ in decomposition \eqref{decom}. Denote
$\mu_\|=\rho \d x,$  $\om_+=\supp \mu_\bot$ and {$\om_-=\om\backslash\om_+$}. Then
\begin{equation}
\phi_1(\mu)=\int_\om e^{-\mu_\|}\d x.
\end{equation}
\end{lem}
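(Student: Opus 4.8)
The plan is to prove the two inequalities $\phi_1(\mu) \le \int_\om e^{-\mu_\|}\d x$ and $\phi_1(\mu) \ge \int_\om e^{-\mu_\|}\d x$ separately, exploiting the fact that $f^* $ is the convex conjugate of $x \mapsto e^{-x}$ on $[0,\infty)$, so that Young's inequality gives $\vph(x)\rho(x) - f^*(\vph(x)) \le e^{-\rho(x)}$ pointwise, with equality precisely when $\vph(x) = -e^{-\rho(x)}$.

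For the upper bound, I would take any admissible test function $\vph \in C_c^\infty(\om)$ with $-1 \le \vph \le 0$, and split $\int_\om \vph \,\d\mu = \int_\om \vph\rho \,\d x + \int_\om \vph \,\d\mu_\bot$. Since $\vph \le 0$ and $\mu_\bot \ge 0$, the singular contribution is nonpositive, so $\int_\om \vph\,\d\mu - \int_\om f^*(\vph)\,\d x \le \int_\om \big(\vph\rho - f^*(\vph)\big)\d x \le \int_\om e^{-\rho}\,\d x$ by the pointwise Young inequality. Taking the supremum over $\vph$ gives $\phi_1(\mu) \le \int_\om e^{-\mu_\|}\d x$. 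This direction is essentially immediate.

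The lower bound is where the real work lies: I must produce a sequence of admissible test functions $\vph_n \in C_c^\infty(\om)$, $-1 \le \vph_n \le 0$, with $\int_\om \vph_n\,\d\mu - \int_\om f^*(\vph_n)\,\d x \to \int_\om e^{-\rho}\,\d x$. The natural candidate is to approximate the (generally merely $L^1$, possibly unbounded) function $-e^{-\rho(x)}$, which lies in $[-1,0)$. The subtleties are: (i) $\rho$ need not be bounded, so $-e^{-\rho}$ may not be bounded away from $-1$, but this is harmless since the target value $-1$ is an endpoint of the allowed range and $f^*(-1) = -1$ is finite; (ii) one must mollify and truncate to land in $C_c^\infty$ while keeping the constraint $-1 \le \vph_n \le 0$ — truncating to a compact subset $\om' \Subset \om$ with $|\om\setminus\om'|\to 0$ handles the compact support, and mollification preserves the two-sided bound by convexity of the constraint set; (iii) crucially, the singular part $\mu_\bot$ is concentrated on a Lebesgue-null set $\om_+$, so one should arrange the test functions to be supported in (a neighborhood of) $\om_- = \om\setminus\om_+$ where $\mu$ is absolutely continuous, making $\int \vph_n\,\d\mu_\bot$ negligible; then by dominated convergence $\int_\om \vph_n\rho\,\d x - \int_\om f^*(\vph_n)\,\d x \to \int_\om(-e^{-\rho}\rho - f^*(-e^{-\rho}))\d x = \int_\om e^{-\rho}\,\d x$, using that $f^*$ is continuous on $[-1,0]$ and the integrands are dominated (note $|f^*(y)| \le 1$ on $[-1,0]$ and $|\vph_n\rho|\le\rho \in L^1$).

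**The main obstacle** I anticipate is the careful handling of the approximation in the lower bound: simultaneously (a) staying in $C_c^\infty(\om)$, (b) respecting the pointwise constraint $-1\le\vph_n\le 0$, (c) controlling the integral against the singular measure $\mu_\bot$, and (d) getting $L^1$-convergence of the $f^*(\vph_n)$ term when $\rho$ is unbounded. A clean way to organize this is: first choose an increasing sequence of compact sets $K_j \Subset \om_-$ with $\mathcal L^d(\om_- \setminus K_j) \to 0$ and $\mathcal L^d(\om_+) = 0$ already; on each $K_j$ set the target $g_j := \max\{-e^{-\rho}, -1 + 1/j\}\cdot \mathbf 1_{K_j}$ (bounded, supported in $\om_-$), mollify at scale $\veps_j$ small enough that the mollification stays supported in $\om_-$ and the resulting smooth function differs from $g_j$ by at most $1/j$ in $L^1$; then diagonalize. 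Since the target $-e^{-\rho}$ is genuinely attained only in the limit and $\mu_\bot$ never sees the supports of the $\vph_n$, both the Young-inequality gap and the singular term vanish along the sequence, yielding the matching lower bound.
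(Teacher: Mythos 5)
Your proof is correct and follows essentially the same route as the paper's: both reduce to the Fenchel conjugate relation $\sup_{-1\le y\le 0}\big(y\rho - f^*(y)\big) = e^{-\rho}$, attained at $y = -e^{-\rho}$, while arranging the test functions to avoid the Lebesgue-null carrier of $\mu_\bot$ so that the nonpositive term $\int_\om\vph\,\d\mu_\bot$ neither spoils the upper bound nor persists in the lower one. Your organization (upper bound via pointwise Young's inequality, lower bound by an explicit truncate--mollify--exhaust sequence) is a bit tidier than the paper's, which folds both inequalities into a single ``restriction'' claim proved via Lusin's theorem and then solves the restricted supremum by an Euler--Lagrange computation without pausing to approximate the optimizer $-e^{-\rho}$ by admissible $C_c^\8$ functions; one small caveat you share with the paper is identifying $\om_+$ with $\supp\mu_\bot$ and asserting $\L^d(\om_+)=0$, which holds for a Lebesgue-null carrier of $\mu_\bot$ but not in general for its closed support, so $\om_+$ should be read as such a carrier.
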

\begin{proof}
From the definition of $\phi_1(\mu),$ we have
\begin{equation}\label{tt17}
\begin{aligned}
\phi_1(\mu)&=\sup_{-1\leq\vph\leq 0, \vph\in C_c^\8(\om)} \left\{\int_\om \vph \d \mu -\int_{\om} f^*(\vph) \d x \right\}\\
&=\sup_{-1\leq\vph\leq 0, \vph\in C_c^\8(\om)} \left\{\int_\om \vph \d \mu -\int_{\om} (\vph-\vph\ln(-\vph)) \d x \right\}\\
&=\sup_{-1\leq\vph\leq 0, \vph\in C_c^\8(\om)} \left\{\int_\om (-\vph +\vph\ln (-\vph)) \d x+\int_\om \vph \d\mu_\| +\int_\om \vph \d \mu_\bot \right\}\\
&=\sup_{-1\leq\vph\leq 0, \vph\in C_c^\8(\om)} \left\{\int_\om (-\vph +\vph\ln (-\vph)+\vph\rho) \d x +\int_\om \vph \d \mu_\bot \right\}.
\end{aligned}
\end{equation}

We claim
\begin{equation}\label{tt18}
\begin{aligned}
&\sup_{-1\leq\vph\leq 0, \vph\in C_c^\8(\om)} \left\{\int_\om \vph(\rho-1 +\ln(- \vph)) \d x +\int_\om \vph \d \mu_\bot \right\}\\
=& \sup_{-1\leq\vph\leq 0, \vph\in C_c^\8(\om), \atop\supp\vph \cap \om_+=\emptyset} \left\{\int_\om \vph(\rho-1 +\ln(- \vph)) \d x +\int_\om \vph \d \mu_\bot \right\}
\end{aligned}
\end{equation}
In fact, on one hand it is obvious that LHS of \eqref{tt18} $\geq$ RHS of \eqref{tt18}. On the other hand,
Since $-1\leq\vph\leq 0$ and $\mu_{\bot}\in \mathcal{M}^+(\om)$, we know $\int_\om \vph \d \mu_\bot\leq 0$ and $\int_\om \vph \d \mu_\bot =0$ for $\supp\vph \cap \om_+=\emptyset$. Hence
\begin{align*}
\text{LHS of \eqref{tt18}}  &\leq \sup_{-1\leq\vph\leq 0, \vph\in C_c^\8(\om)} \left\{\int_\om \vph(\rho-1 +\ln(- \vph)) \d x  \right\}.
\end{align*}
For any $\vep>0,$ there exists $-1\leq\vph_0\leq 0, \vph_0\in C_c^\8(\om)$ such that
\begin{align*}
\text{LHS of \eqref{tt18}}
&\leq \int_\om \vph_0(\rho-1 +\ln(- \vph_0)) \d x  +\vep .
\end{align*}
Notice $|\om_+|=0$. For $\vph_0$, from the strong Lusin's theorem \cite[p.8]{OTAM}, there exist compact set $K\subset\om_-$ and $f\in C_c^\8(\om_-)$ such that $f=\vph$ on $K$, $-1\leq f\leq 0$ and $\int_{\om\backslash K} (\rho+1) \d x\leq \vep$.
Hence we have
\begin{align*}
\text{LHS of \eqref{tt18}}
&\leq \Big(\int_{K}+\int_{\om\backslash K}\Big) \big(\vph_0(\rho-1 +\ln(- \vph_0))\big) \d x  +\vep \\
&\leq  \int_K f(\rho-1 +\ln(- f)) \d x +c\vep \\
&\leq \int_{\om_-} f(\rho-1 +\ln(- f)) \d x  +c\vep\\
&\leq \sup_{-1\leq f\leq 0, f\in C_c^\8(\om_-)} \left\{\int_{\om_-} f(\rho-1 +\ln(- f)) \d x  \right\}+c\vep\\
&=\sup_{-1\leq\vph\leq 0, \vph\in C_c^\8(\om), \atop\supp\vph \cap \om_+=\emptyset} \left\{\int_\om \vph(\rho-1 +\ln(- \vph)) \d x \right\}+c\vep,
\end{align*}
where the constant $c$ does not depends on $\vep$.
This implies LHS of \eqref{tt18} $\leq$ RHS of \eqref{tt18} $+c\vep$ and we know the claim \eqref{tt18} holds.

Combining \eqref{tt17} and \eqref{tt18}, we obtain
Therefore
\begin{align*}
\phi_1(\mu)
&=\sup_{-1\leq\vph\leq 0, \vph\in C_c^\8(\om_-)} \left\{\int_{\om_-} \vph(\rho-1 +\ln(- \vph)) \d x  \right\}\\
&= \int_{\om_-}\vph^*( \rho-1+\ln(- \vph^*) )\d x,
\end{align*}
where $\vph^*=-e^{-\rho}$ such that $F(\vph):=\int_{\om_-}\vph(\rho-1 +\ln(- \vph))\d x$,  $\frac{\delta F(\vph)}{\delta \vph}=0.$
Hence we have
\begin{equation}
\phi_1(\mu)=\int_{\om_-}\vph^*( 1-\ln \vph^*-  \rho )\d x=\int_{\om_-}e^{-\rho} \d x=\int_\om e^{-\mu_\|}\d x.
\end{equation}
\end{proof}
\begin{remark}
If $\Delta u\in \mathcal{M}^+(\om)$, taking $\mu=\Delta u$ in the definition \eqref{phi}, we can see from Lemma \ref{lemequi} that the two definitions are equivalent. If $\Delta u+C \in\mathcal{M}^+(\om)$, then we can take $\mu=\Delta u+C$ in Lemma \ref{lemequi} and definition \eqref{phi}.
\end{remark}
}

\medskip

In view of the a priori estimate on the mass of the measure $\Delta u$,
 we introduce the indicator function
\begin{equation}\label{psi}
\psi:H\lra \{0,+\8\},\qquad \psi(u):=
\begin{cases}
0 & \text{if } u\in \TD  ,\ \|\Delta u\|_{\mathcal{M}(\om)}\le C_*,\\
+\8 & \text{otherwise}.
\end{cases}
\end{equation}
Here $C_*$ is a fixed constant, which is determined in \eqref{consC} by the initial datum later. From Section \ref{sec1.2} we know the bound for $\|\Delta u\|_{\mathcal{M}(\om )}$  \eqref{psi}  is not artificial.

\subsection{Euler schemes}\label{sec2.2}
Even if \eqref{maineq} has a nice variational structure, and $V$ has Banach space structure,
the non-reflexivity of
$V$ imposes extra technical difficulties.
Instead of arguing with maximal monotone operator like in \cite{ourxu},
 we try to use the result \cite[Theorem~4.0.4]{AGS}
by Ambrosio, Gigli and Savar\'e. After defining the energy functional rigorously, we take the counterintuitive
approach of ignoring the variational structure of \eqref{maineq}
and the Banach space structure of $W^{2,1}(\om)$. In other words, we consider the gradient flow evolution in the {\em metric}
space $(H,\mbox{dist})$, with distance $\mbox{dist}(u,v):=\|u-v\|_H$.

Let $u^0\in {H}$ be a given initial datum and $0<\tau\ll1$ be a given parameter.
We consider a sequence $\{x_n^{\tau}\}$ which satisfies the following unconditional-stable backward Euler scheme
\begin{equation}\label{E}
\left\{
\begin{array}{l}
x^{(\tau)}_n\in \argmin_{x'\in H} \left\{(\phi+\psi)(x')+\dfrac1{2\tau} \|x'-x^{(\tau)}_{n-1}\|^2_{H}
\right\}  \qquad n\ge 1,\\
x^{(\tau)}_0:= u^0\in H.
\end{array}
\right.
\end{equation}
The existence and uniqueness of the sequence $\{x_n^{\tau}\}$ will be proved later in Proposition \ref{EulerTh}.
Thus we are considering the gradient descent with respect to $\phi+\psi$ in the space $(H,\mbox{dist})$.

Now for any $0<\tau\ll1$ we define the resolvent operator (see \cite[p. 40]{AGS})
\begin{equation*}
\mathcal{J}_\tau[u]:=\argmin_{v\in H} \left\{(\phi+\psi)(v)+\dfrac1{2\tau} \|v-u\|^2_{H}\right\},
\end{equation*}
then the variational approximation obtained by Euler scheme \eqref{E} is
\begin{equation}\label{Euler10}
u_n:=(\mathcal{J}_{t/n})^n[u^0].
\end{equation}
The results for gradient flow in metric space \cite[Theorem~4.0.4]{AGS} establish the convergence of the variational approximation $u_n$ to variational inequality solution to \eqref{maineq}, which is defined below.
\begin{defn}\label{defweak}
Given initial data $u^0\in H$, we call $u:[0,+\8)\lra H$ a variational inequality solution to \eqref{maineq} if $u(t)$ is a locally absolutely continuous curve such that $\lim_{t\to 0} u(t)=u^0$ in $H$ and
\begin{equation}
\frac12\frac\d{\d t}\|u(t)-v\|^2\le (\phi+\psi)(v)-(\phi+\psi)(u(t)), \quad \text{for a.e. } t>0,\,\forall v\in D(\phi+\psi).
\end{equation}
\end{defn}

Before proving the existence of variational inequality solution to \eqref{maineq}, we first study some properties of the functional $\phi+\psi$ in Section \ref{sec2.3} and Section \ref{sec2.4}.

\subsection{Weak-* lower semi-continuity for functional $\phi$ in $\TD$}\label{sec2.3}
{For any $\mu\in \mathcal{M}(\om)$, we denote $\mu\ll\L^d$ if $\mu$ is absolutely continuous with respect to Lebesgue measure and denote $\bar{\mu}:=\frac{\d\mu}{\d\L^d}$ as the density of $\mu$. For notational simplification, denote $\mu_{\|}$ (resp. $\mu_{\bot}$) as the absolutely continuous part (resp. singular part) of $\mu$ with respect to Lebesgue measure.

Let us first give the following proposition claiming weak-* lower semi-continuity for functional $\phi$ in $\TD$,  which will be used in Lemma \ref{convex}.
\begin{prop}\label{newlsc}
  Let $u_n,\, u\in \TD$. If $\Delta u_n {\rhu} \Delta u$ in $\mathcal{M}(\om)$, we have
  \begin{equation}\label{lsc01}
  \liminf_{n\to +\8}\phi(u_n)\geq \phi(u).
  \end{equation}
\end{prop}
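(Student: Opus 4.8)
The plan is to reduce the lower-semicontinuity statement to a known convexity/duality fact about functionals of measures. Write $\mu_n := \Delta u_n$ and $\mu := \Delta u$, so that $\mu_n \rhu \mu$ in $\mathcal{M}(\om)$. Recall that $\phi(u_n) = \int_\om e^{-(\mu_n)_\|^+ + (\mu_n)^-}\,\d x$, and since the integrand $e^{-(\mu_n)_\|^+ + (\mu_n)^-}$ ignores the singular part $(\mu_n)^+_\bot$ entirely, the essential content is that the map sending a measure to the integral of a decreasing convex function of its density is lower semicontinuous, and that passing from the full measure to its absolutely continuous part can only decrease the relevant quantity in the limit. Concretely, I would first show that $\phi(u_n) \geq \phi_1(\mu_n^+ \text{ shifted}) = \phi_1((\mu_n)^- \text{-adjusted})$ is not quite the right packaging; instead the cleanest route is the dual formulation. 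By Lemma~\ref{lemequi} (and the remark following it, allowing a shift by a constant so the measure is positive — here one uses that $\|\Delta u_n\|_{\mathcal{M}(\om)}$ is bounded, which follows from the weak-$*$ convergence and uniform boundedness principle, or simply from $\psi$), we have the variational representation
\begin{equation}\label{pf-dual}
\phi(u_n) = \sup_{-1\le \vph\le 0,\ \vph\in C_c^\8(\om)} \left\{ \int_\om \vph\, \d(\Delta u_n) - \int_\om f^*(\vph)\,\d x \right\},
\end{equation}
and likewise for $\phi(u)$, \emph{provided} the singular part of $\Delta u_n$ is nonnegative on the relevant region; more carefully, since $(\Delta u_n)^- \ll \L^d$ whenever $\phi(u_n) < +\8$, the singular part of $\Delta u_n$ equals $(\Delta u_n)^+_\bot \ge 0$, which is exactly the hypothesis under which the duality formula of Lemma~\ref{lemequi} applies after a constant shift.

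Granting \eqref{pf-dual}, the lower semicontinuity is immediate: for each fixed admissible test function $\vph \in C_c^\8(\om)$ with $-1 \le \vph \le 0$, the map $\nu \mapsto \int_\om \vph\,\d\nu$ is continuous under weak-$*$ convergence in $\mathcal{M}(\om)$ (that is the definition of the weak-$*$ topology tested against $C_b(\om) \supset C_c^\8(\om)$), while $\int_\om f^*(\vph)\,\d x$ does not depend on $n$. Hence
\begin{equation*}
\liminf_{n\to+\8} \phi(u_n) \ge \liminf_{n\to+\8}\left\{ \int_\om \vph\,\d(\Delta u_n) - \int_\om f^*(\vph)\,\d x\right\} = \int_\om \vph\,\d(\Delta u) - \int_\om f^*(\vph)\,\d x,
\end{equation*}
and taking the supremum over all such $\vph$ gives $\liminf_{n\to+\8}\phi(u_n) \ge \phi(u)$. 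The case $\phi(u) = +\8$ needs separate attention only if it can occur with $u \in \TD$: it arises exactly when $(\Delta u)^-$ has a nontrivial singular part, i.e. $\Delta u$ is not bounded below in the measure sense; I would handle this by choosing test functions $\vph$ concentrating near the singular support of $(\Delta u)^-$ to force the dual expression to $+\8$, using that $\int \vph\,\d(\Delta u_n) \to \int \vph\,\d(\Delta u) = -\8$ formally — more rigorously, one truncates and uses that $\phi(u) = +\8$ means the supremum in \eqref{pf-dual} is $+\8$, so the same test-function argument applies.

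The main obstacle I anticipate is justifying the dual representation \eqref{pf-dual} for $\phi$ as defined in \eqref{phi} rather than for $\phi_1$ — the two agree (after a constant shift) only when the singular part of the measure is nonnegative, and the subtlety is that $\phi$ is \emph{defined} to see only $(\Delta u)^+_\|$ while discarding $(\Delta u)^+_\bot$, whereas $\phi_1$ in Lemma~\ref{lemequi} also discards the singular part of a positive measure. So the identity $\phi(u) = \phi_1(\Delta u + C\,\L^d)$ (for a shift $C$ making the argument a positive measure) holds precisely because in Lemma~\ref{lemequi} the singular part drops out of the value. One must also verify the constant shift is legitimate here: adding $C\,\L^d$ to $\Delta u_n$ changes both $\int \vph\,\d(\Delta u_n)$ and the constraint set consistently, and since $\|\Delta u_n\|_{\mathcal M} \le C_*$ via $\psi$ (or is bounded by weak-$*$ convergence), a single $C$ works uniformly in $n$. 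Once this bookkeeping is in place — matching \eqref{phi} to the duality pairing of Lemma~\ref{lemequi} and checking the constant shift is uniform — the semicontinuity falls out of the elementary fact that a supremum of weak-$*$ continuous affine functionals is weak-$*$ lower semicontinuous.
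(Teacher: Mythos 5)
Your approach is genuinely different from the paper's: you want to recognise $\phi$ as a supremum of affine functionals of $\Delta u_n$ that are each weak-$*$ continuous, via the dual formula of Lemma~\ref{lemequi}, and conclude lower semicontinuity in one stroke. The paper instead argues directly with a truncation $\min\{f_n,N\}$, an $L^2$ a priori bound (Lemma~\ref{nnsp}), a compactness lemma (Lemma~\ref{lem0105}), and a case analysis on the ``cross convergence'' of absolutely continuous and singular parts, letting $N\to\infty$ at the end. If your reduction to Lemma~\ref{lemequi} were available, it would indeed be the cleaner route. However, the reduction as you wrote it has a genuine gap precisely at the step you flag as ``bookkeeping'': you claim that because $\|\Delta u_n\|_{\mathcal M(\om)}\le C_*$ (or is bounded by the uniform boundedness principle), there is a single constant $C$ with $\Delta u_n + C\,\L^d\in\mathcal M^+(\om)$ uniformly in $n$. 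That implication is false. Bounded total variation says nothing about the pointwise lower bound of the absolutely continuous density: the measures $\mu_n=-n\,\chi_{[0,1/n]}\,\L^d$ all have $\|\mu_n\|_{\mathcal M}=1$ yet no fixed $C$ makes $\mu_n+C\,\L^d$ positive. Even the uniform bound $\phi(u_n)\le L$ only controls $\L^d(\{(\Delta u_n)_\|<-M\})\le Le^{-M}$, not a pointwise lower bound. The paper is explicit about exactly this obstruction immediately before Lemma~\ref{lemequi} (``we only prove that $(\Delta u)^-\ll\mathcal L^d$ and do not have a lower bound for $\Delta u$''), and this is the stated reason it \emph{avoids} the dual formulation and works with definition~\eqref{phi} directly.

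Concretely: Lemma~\ref{lemequi} is proved for $\mu\in\mathcal M^+(\om)$ with the constraint $-1\le\vph\le 0$ (the constraint $\vph\ge -1$ arises precisely from restricting the primal variable $x$ to $x\ge 0$ in the Legendre transform of $e^{-x}$), so the constrained dual $\phi_1$ undervalues $\int e^{-\mu_\|}\,\d x$ on any set where $\mu_\|<0$, since the optimiser $\vph^*=-e^{-\mu_\|}$ drops below $-1$ there. Without a uniform shift, the identity $\phi(u_n)=e^{C}\phi_1(\Delta u_n+C\L^d)$ you need (note the multiplicative factor $e^C$, which your proposal also omits) is simply not available. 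You could hope to rescue the argument by proving a version of Lemma~\ref{lemequi} with only the one-sided constraint $\vph\le 0$ for measures that merely satisfy $\mu^-\ll\L^d$ rather than $\mu\ge 0$; but then the optimal $\vph^*$ is unbounded below and not in $C_c^\8(\om)$, and justifying that smooth compactly supported test functions still attain the supremum requires a truncation-from-below argument that is essentially the same work as the paper's Lemmas~\ref{nnsp}--\ref{lem0105}. So the dual route is not actually shorter once made rigorous; as written, it does not close. A secondary, smaller issue: you assert $\phi(u)=+\8$ ``arises exactly when $(\Delta u)^-$ has a nontrivial singular part,'' but it can also occur with $(\Delta u)^-\ll\L^d$ if $\int_\om e^{(\Delta u)^-}\,\d x=+\8$; your proof of the $+\8$ case would have to cover that scenario as well.
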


Before proving Proposition \ref{newlsc}, we first state some lemmas.

From now on, we identify $\mu_n\ll\L^d$ with its density $\bar{\mu}_n:=\frac{\d\mu_n}{\d\L^d}$ and do not distinguish them for brevity.
Given $N>0$ and a sequence of measures $\mu_n$ such that $\mu_n\ll\L^d$,
 observe that
\begin{equation}\label{b}
\mu_n = \min\{\mu_n,N\}+\max\{\mu_n,N\}-N.
\end{equation}
{\blue
Let $\vph(\mu_n):=\int_\om e^{-(\mu_n)_\|}\d x$. First we state a lemma which shows that the uniform bound for $\vph(\mu_n)$ immediately rules out a negative singular part of $\mu$.
\begin{lem}\label{nnsp}
For any measure $\mu\ll \mathcal{L}^d$ any $N>0$, if $\vph(\mu)=\int_\om e^{-\mu}\d x\leq A<+\8$ for some bounded constant $A$, then we have the uniform estimate
\begin{equation}
\|\min\{\mu, N\}\|^2_{L^2(\om)}\leq 4e^N A+2|\om|N^2.
\end{equation}
\end{lem}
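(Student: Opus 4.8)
The plan is to reduce everything to the elementary pointwise bound coming from the power series of the exponential. Write $v:=\min\{\mu,N\}$, which we regard as an $L^1(\om)$ function since $\mu\ll\L^d$. The first step is the pointwise identity $v = N-(N-\mu)^+$, where $(N-\mu)^+:=\max\{N-\mu,0\}$; this is checked directly by distinguishing the sets $\{\mu\le N\}$ and $\{\mu>N\}$, and it is just a rewriting of the truncation identity \eqref{b}. Using $(a-b)^2\le 2a^2+2b^2$ one gets $v^2\le 2N^2+2\big((N-\mu)^+\big)^2$ a.e. on $\om$.

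The second step is to control $\int_\om\big((N-\mu)^+\big)^2\d x$. Since $(N-\mu)^+\ge 0$, the inequality $e^x\ge 1+x+\tfrac{x^2}{2}\ge \tfrac{x^2}{2}$ for $x\ge 0$ gives $\big((N-\mu)^+\big)^2\le 2e^{(N-\mu)^+}$ pointwise. On $\{\mu\le N\}$ we have $e^{(N-\mu)^+}=e^{N-\mu}=e^{N}e^{-\mu}$, whereas on $\{\mu>N\}$ we have $(N-\mu)^+=0$, so in both cases $\big((N-\mu)^+\big)^2\le 2e^{N}e^{-\mu}$ holds pointwise. Integrating and using the hypothesis $\int_\om e^{-\mu}\d x\le A$ yields $\int_\om\big((N-\mu)^+\big)^2\d x\le 2e^{N}A$. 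Combining the two steps, $\int_\om v^2\d x\le 2N^2|\om|+4e^{N}A$, which is exactly the claimed estimate $\|\min\{\mu,N\}\|_{L^2(\om)}^2\le 4e^{N}A+2|\om|N^2$.

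There is no serious obstacle here — it is a short elementary estimate — but one subtlety deserves flagging. The hypothesis controls only the negative part of $\mu$ (through the finiteness of $\int_\om e^{-\mu}\d x$), and nothing prevents the positive part of $\mu$ from being far from square integrable; hence it is essential that the splitting be organized around $(N-\mu)^+$, which is supported on $\{\mu\le N\}$ and therefore only "sees" the region where $\mu$ is bounded above, rather than around $\mu$ itself or around $\max\{\mu,N\}-N=(\mu-N)^+$. The decomposition \eqref{b} is precisely what makes this reorganization transparent.
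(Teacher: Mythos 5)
Your proof is correct and follows essentially the same route as the paper: both bound $\|\min\{\mu,N\}\|_{L^2}^2$ by $2\|N-\min\{\mu,N\}\|_{L^2}^2 + 2N^2|\om|$ and then estimate $N-\min\{\mu,N\}=(N-\mu)^+$ via the inequality $e^{x}\ge x^2/2$ for $x\ge 0$, localized to the set $\{\mu\le N\}$ where $e^{-\min\{\mu,N\}}=e^{-\mu}$. The only difference is cosmetic notation ($(N-\mu)^+$ versus working directly with the truncation on $\{\mu\le N\}$).
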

\begin{proof}
Noticing $e^{|x|}\geq \frac{x^2}{2}$ for any $x$, we have
\begin{align*}
&e^{-N}\int_\om |N-\min\{\mu,N\}|^2 \d x \\
=&  e^{-N}\int_{\{\mu\leq N\}} |N-\min\{\mu,N\}|^2 \d x\\
\leq & 2e^{-N}\int_{\{\mu\leq N\}} e^{N-\min\{\mu,N\}} \d x\\
=& 2\int_{\{\mu\leq N\}} e^{-\min\{\mu,N\}} \d x\\
=& 2\int_{\{\mu\leq N\}} e^{-\mu} \d x \leq 2A.
\end{align*}
Therefore we obtain
\begin{align*}
\|\min\{\mu, N\}\|^2_{L^2(\om)}\leq& \int_\om 2|N-\min\{\mu,N\}|^2 + 2N^2 \d x\\
\leq& 4e^N A+2|\om|N^2.
\end{align*}
\end{proof}
}
Next we state a lemma about the limit of the truncated measure $\min\{\mu_n,N\}.$
\begin{lem}\label{lemdunford}
For any $N>0$, given a sequence of measures $\mu_n$ such that $\mu_n\ll\L^d$, we assume moreover that $\mu_n\rhu\mu$ and $\vph(\mu_n)\leq A<+\8$ for some bounded constant $A$. Then there exist  measure $\mu_{\mbox{down}}\ll\L^d$ and  {\blue subsequence ($n_k$ still denoted as $n$) $\mu_n$,}  such that $N\ge \mu_{\mbox{down}}$ and $\min\{\mu_n,N\} \rhu\mu_{\mbox{down}}$.
\end{lem}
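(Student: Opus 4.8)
The plan is to extract a weak-* convergent subsequence of the truncated measures and then identify the limit as an absolutely continuous measure bounded above by $N$. First I would observe that the sequence $\min\{\mu_n,N\}$ is uniformly bounded in $L^2(\om)$: indeed each $\mu_n\ll\L^d$ by hypothesis, and $\vph(\mu_n)=\int_\om e^{-\mu_n}\d x\le A$, so Lemma \ref{nnsp} gives $\|\min\{\mu_n,N\}\|_{L^2(\om)}^2\le 4e^N A+2|\om|N^2$, a bound independent of $n$. Since $L^2(\om)$ is a reflexive (indeed Hilbert) space, the closed ball of radius $\sqrt{4e^N A+2|\om|N^2}$ is weakly sequentially compact, so there is a subsequence (not relabeled) and a function $\mu_{\te{down}}\in L^2(\om)$ with $\min\{\mu_n,N\}\wra\mu_{\te{down}}$ weakly in $L^2(\om)$. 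One then upgrades this to weak-* convergence in $\mathcal{M}(\om)$: testing against $C_b(\om)\ss L^2(\om)$ (recall $\om$ is bounded) shows $\int_\om\vph\,\d(\min\{\mu_n,N\})\to\int_\om\vph\,\mu_{\te{down}}\d x$ for every $\vph\in C_b(\om)$, which is exactly $\min\{\mu_n,N\}\rhu\mu_{\te{down}}$, and in particular $\mu_{\te{down}}\ll\L^d$.

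Next I would verify the pointwise bound $N\ge\mu_{\te{down}}$. This follows from the one-sided constraint $\min\{\mu_n,N\}\le N$ being preserved under weak $L^2$-limits: if $\vph\in L^2(\om)$ with $\vph\ge 0$, then $\int_\om(N-\min\{\mu_n,N\})\vph\,\d x\ge 0$ for every $n$, and passing to the limit gives $\int_\om(N-\mu_{\te{down}})\vph\,\d x\ge 0$; since this holds for all nonnegative test functions $\vph$, we conclude $\mu_{\te{down}}\le N$ a.e. Equivalently, the closed convex set $\{f\in L^2(\om):f\le N\ \text{a.e.}\}$ is weakly closed, and $\min\{\mu_n,N\}$ lies in it. This delivers all the claimed properties: $\mu_{\te{down}}\ll\L^d$, $N\ge\mu_{\te{down}}$, and (along the subsequence) $\min\{\mu_n,N\}\rhu\mu_{\te{down}}$.

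The only mild subtlety — and the step I would treat most carefully — is the passage from weak $L^2$-convergence to weak-* convergence in $\mathcal{M}(\om)$, since $C_b(\om)$ is not contained in $L^2$ for unbounded domains; but here $\om$ is bounded, so $C_b(\om)\ss L^\8(\om)\ss L^2(\om)$ and the test against bounded continuous functions is legitimate. One should also note that the hypothesis $\mu_n\rhu\mu$ is not really needed to extract $\mu_{\te{down}}$ — the $L^2$ bound from Lemma \ref{nnsp} alone suffices — but it guarantees $\{\mu_n\}$ is uniformly bounded in $\mathcal{M}(\om)$ so that the notation is consistent, and it will be the relation between $\mu_{\te{down}}$ and $\min\{\mu,N\}$ (not asserted in this lemma but needed later) that actually uses it. I expect no genuine obstacle here: the lemma is essentially weak compactness in $L^2$ plus closedness of a half-space under weak limits.
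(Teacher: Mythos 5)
Your proof is correct and uses the same key ingredient as the paper, namely the uniform $L^2$ bound from Lemma~\ref{nnsp} plus weak compactness; you simply reverse the order (extract the weak $L^2$ limit first, so absolute continuity is automatic, then upgrade to weak-$*$ in $\mathcal{M}(\om)$), whereas the paper first extracts the weak-$*$ limit in $\mathcal{M}(\om)$ and then invokes the $L^2$ bound to conclude $\mu_{\text{down}}\ll\L^d$. Your side remark that the hypothesis $\mu_n\rhu\mu$ is not strictly needed here is also accurate, and your ordering is arguably cleaner.
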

\begin{proof}
Since $\mu_n\rhu\mu$, we know there exists $\mu_{\mbox{down}}\in \mathcal{M}(\om)$ such that  $\min\{\mu_n,N\} \rhu\mu_{\mbox{down}}$ (upto subsequence). From $N-\min\{\mu_n,N\}\geq 0$ we have $N-\mu_{\mbox{down}}\geq 0$. Moreover we claim $\mu_{\mbox{down}}\ll\L^d$. {\blue From the assumption in Lemma \ref{lemdunford} we know $\vph(\mu_n)\leq A+1$ for all $n$. Therefore, from Lemma \ref{nnsp} we know $\|\min\{\mu_n, N\}\|^2_{L^2(\om)}\leq C(N,A).$ Hence $\mu_{\mbox{down}}\ll\L^d$. Moreover, from $N-\min\{\mu_n,N\}\geq 0$ we have $N-\mu_{\mbox{down}}\geq 0$.}
\end{proof}

 We also need the following useful lemma to clarify the relation between $\mu_{\mbox{down}}$ and the weak-$*$ limit of $\mu_n$.
\begin{lem}\label{lem0105}
 Given a sequence of measures $\mu_n$ such that $\mu_n\ll\L^d$,  we assume moreover that $\mu_n\rhu\mu$ and $\vph(\mu_n)\leq A<+\8$ for some bounded constant $A$. Then for any $N>0$, there exist $\mu_{\mbox{down}},\,\mu_{\mbox{up}}\in \mathcal{M}(\I)$ and {\blue subsequence ($n_k$ still denoted as $n$) $\mu_n$,} such that
 \begin{equation}\label{starlem1}
   \min\{\mu_n,N\}\rhu \mu_{\mbox{down}},\quad \mu_{\mbox{down}}\ll\L^d,\quad \mu_{\mbox{down}}\leq \mu_{\|},
 \end{equation}
  \begin{equation}\label{starlem2}
   \max\{\mu_n,N\}\rhu \mu_{\mbox{up}},\quad (\mu_{up})_\|\ge N,
 \end{equation}
 where $\mu_{\|}$ (resp. $\mu_{\bot}$) is the absolutely continuous part (resp. singular part) of $\mu$. Moreover,
 \begin{equation}\label{PHI}
   \int_{\om}e^{-\mu_{||}}\d x \leq \int_{\om} e^{-\mu_{{down}}}\d x.
 \end{equation}
\end{lem}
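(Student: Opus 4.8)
The plan is to assemble the three parts from the preceding lemmas and then prove the key pointwise-type inequality $\mu_{\mbox{down}}\le\mu_\|$, which is the real content here. First, applying Lemma \ref{lemdunford} to the sequence $\mu_n$ with the given $N$, I extract a subsequence (not relabeled) and a measure $\mu_{\mbox{down}}\ll\L^d$ with $\mu_{\mbox{down}}\le N$ and $\min\{\mu_n,N\}\rhu\mu_{\mbox{down}}$; this gives the first two assertions of \eqref{starlem1}. For \eqref{starlem2}, I use the decomposition \eqref{b}, namely $\max\{\mu_n,N\}=\mu_n-\min\{\mu_n,N\}+N$; since $\mu_n\rhu\mu$ and $\min\{\mu_n,N\}\rhu\mu_{\mbox{down}}$ along the subsequence, the right-hand side converges weak-$*$ to $\mu-\mu_{\mbox{down}}+N=:\mu_{\mbox{up}}$, so $\max\{\mu_n,N\}\rhu\mu_{\mbox{up}}$ automatically along the same subsequence (no further extraction needed). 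Since $\max\{\mu_n,N\}\ge N$ as measures, the limit satisfies $\mu_{\mbox{up}}\ge N\,\L^d$ in the sense of measures; splitting into absolutely continuous and singular parts and noting the singular part of $N\,\L^d$ is zero, this forces $(\mu_{\mbox{up}})_\|\ge N$, which is the claim in \eqref{starlem2}.

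The crucial step is \eqref{starlem1}'s last inequality $\mu_{\mbox{down}}\le\mu_\|$. Write $\mu_{\mbox{up}}=\mu-\mu_{\mbox{down}}+N\,\L^d$ from the previous paragraph, and decompose: $(\mu_{\mbox{up}})_\|=\mu_\|-\mu_{\mbox{down}}+N$ (using $\mu_{\mbox{down}}\ll\L^d$) and $(\mu_{\mbox{up}})_\bot=\mu_\bot$. Now I claim $\mu_{\mbox{up}}\ge\mu_n$'s limit behavior forces a one-sided bound: since $\max\{\mu_n,N\}\ge\mu_n$ as measures, passing to the weak-$*$ limit gives $\mu_{\mbox{up}}\ge\mu$; comparing absolutely continuous parts, $(\mu_{\mbox{up}})_\|\ge\mu_\|$, i.e. $\mu_\|-\mu_{\mbox{down}}+N\ge\mu_\|$, which only yields $\mu_{\mbox{down}}\le N$ — not quite enough. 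To get $\mu_{\mbox{down}}\le\mu_\|$ I instead use $\min\{\mu_n,N\}\le\mu_n$ as measures; passing to the weak-$*$ limit gives $\mu_{\mbox{down}}\le\mu$. Since $\mu_{\mbox{down}}\ll\L^d$, comparing with the Lebesgue decomposition $\mu=\mu_\|+\mu_\bot$ with $\mu_\bot\ge0$ (here $\mu_\bot$ is the positive singular part; if $\mu$ has a negative singular part Lemma \ref{nnsp}/\ref{lemdunford} already excludes it for the relevant $\mu$, but in general one argues on the set where $\mu_\bot$ lives) and testing against nonnegative functions supported away from $\supp\mu_\bot$ plus a Hahn-decomposition argument, one concludes $\mu_{\mbox{down}}\le\mu_\|$ as absolutely continuous measures, i.e. $\mu_{\mbox{down}}(x)\le\mu_\|(x)$ a.e.

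Finally, \eqref{PHI} follows immediately from the monotonicity of $t\mapsto e^{-t}$: from $\mu_{\mbox{down}}\le\mu_\|$ a.e. we get $e^{-\mu_\|}\le e^{-\mu_{\mbox{down}}}$ pointwise a.e., and integrating over $\om$ gives $\int_\om e^{-\mu_\|}\d x\le\int_\om e^{-\mu_{\mbox{down}}}\d x$.

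The main obstacle is making the inequality $\mu_{\mbox{down}}\le\mu_\|$ fully rigorous: weak-$*$ convergence only gives order relations against \emph{nonnegative} test functions, and one must carefully separate the behavior on the (Lebesgue-null) support of the singular part of $\mu$ from its behavior on the complement — essentially localizing the weak-$*$ inequality $\mu_{\mbox{down}}\le\mu$ using that $\mu_{\mbox{down}}$ charges no null set, so that the singular part of $\mu$ contributes nothing to the comparison and only the density $\mu_\|$ matters. I expect this localization argument, together with checking that the extracted subsequence can be taken common to both \eqref{starlem1} and \eqref{starlem2}, to be the delicate part; everything else is bookkeeping with the identity \eqref{b} and weak-$*$ lower/upper semicontinuity.
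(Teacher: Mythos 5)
Your proposal is correct and follows essentially the paper's own route: apply Lemma~\ref{lemdunford}, use the identity~\eqref{b} to define $\mu_{\mbox{up}}:=\mu-\mu_{\mbox{down}}+N$, and pass the sign $\max\{\mu_n,N\}-N\ge0$ to the weak-$*$ limit to get $\mu-\mu_{\mbox{down}}\ge0$. The localization you flag as delicate is in fact automatic: since $\mu-\mu_{\mbox{down}}\ge0$ is a nonnegative measure and $\mu_{\mbox{down}}\ll\L^d$, the Lebesgue decomposition of a nonnegative measure has nonnegative absolutely continuous and singular parts, so $(\mu-\mu_{\mbox{down}})_\|=\mu_\|-\mu_{\mbox{down}}\ge0$ follows immediately, with no Hahn decomposition or separate treatment of $\supp\mu_\bot$ required.
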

\begin{proof}
From Lemma \ref{lemdunford} we know, upon subsequence, $\min\{\mu_n,N\} \rhu\mu_{\mbox{down}}$ for some measure
$\mu_{\mbox{down}}$ satisfying $\mu_{\mbox{down}}\ll\L^d$ and $N\ge \mu_{\mbox{down}}$. By Lebesgue decomposition theorem, there exist unique measures $\mu_\|\ll\L^d$ and $\mu_\bot\bot\L^d$
such that $\mu=\mu_\|+\mu_\bot$.
The decomposition \eqref{b} then gives
$$0\le \mu_n-\min\{\mu_n,N\} = \max\{\mu_n,N\}-N \rhu\mu - \mu_{\mbox{down}}.$$
 Taking $\mu_{\mbox{up}}:=\mu - \mu_{\mbox{down}}+N$, as the sequence $\max\{\mu_n,N\}-N\ge0$, we obtain $\max\{\mu_n,N\}\rhu \mu_{\mbox{up}}$ and $(\mu - \mu_{\mbox{down}})_\|=\mu_{\mbox{up}\|}-N\ge 0$. Besides,  since $e^{-\mu_{\|}}$ is decreasing with respect to $\mu_{\|}$ and $\mu_{\|}\ge \mu_{\mbox{down}}$, we obtain \eqref{PHI}.
 \end{proof}

Now we can start to prove Proposition \ref{newlsc}.

\begin{proof}[Proof of Proposition \ref{newlsc}]
Assume $\Delta u_n{\rhu} \Delta u$ in $\mathcal{M}.$ Denote $f_n:=\Delta u_n$ and $f:=\Delta u$. {\blue  Set $L:= \liminf_{n\to +\infty}\phi(u_{n})$.  If $L= +\infty$ then $\eqref{lsc01}$ holds. If $L<\8$, which means there exists a subsequence such that $\lim_{k\to \8}\phi(u_{n_k})< +\infty$, then we take these subsequence (still denoted as $u_n$) and without loss of generality assume $\lim_{n\to\8}\phi(u_n)=L< +\infty$.  So $\phi(u_n)\leq L+1$ for all large $n$ and $f_n^-\ll\L^d$.}

 {\blue Since $\phi$ is defined only on the regular part of $\Delta u$, we concern about the ``cross convergence" case. In fact,  by the convexity of $\varphi(v):=\int_\om e^{-v}\d x$ on $L^1(\om)$ and \cite[Corollary 3.9]{BrezisF}, we know $\varphi(v)$ is l.s.c on $L^1(\om)$ with respect to the weak topology. Therefore, if we have $f_{n\|}\rhu f_{\|}$ and $f_{n\bot}\rhu f_{\bot}$, then \eqref{lsc01} holds.
This implies that we only need to consider two ``cross convergence" cases:} (i) there are some $f_n$ are positive measures, i.e. $f_{n\bot}\neq 0$, and $f_{n\|}\rhu g_1\ll\L^d$, $f_{n\bot}\rhu g_2\ge 0$ and $g_1+g_2=f_{\|}$; or (ii) all $f_n$ are absolutely continuous and $f_{n\|}=f_n$ may weakly-* converge to a singular measure.

 For case (i), if we have $f_{n\|}\rhu g_1\ll\L^d$, $f_{n\bot}\rhu g_2\ge 0$ and $g_1+g_2=f_{\|}$, then since $e^{-f_{\|}}$ is decreasing with respect to $f_{\|}$, we have $\int_\om e^{-g_1}\d x\geq \int_\om e^{-f_{\|}}\d x$.
On the other hand, we know $\varphi(v):=\int_\om e^{-v}\d x$ is lower-semicontinuous on $L^1(\om)$
with respect to the strong topology.
Hence by the convexity of $\varphi(v):=\int_\om e^{-v}\d x$ on $L^1(\om)$ and \cite[Corollary 3.9]{BrezisF}, we know $\varphi(v)$ is l.s.c on $L^1(\om)$ with respect to the weak topology. So $f_{n\|}\rhu g_1\ll\L^d$ gives $f_{n\|}\rightharpoonup g_1 $ in $L^1(\om)$ and
\begin{equation}\label{lsc02}
\liminf_n\phi(u_n)=\liminf_{n}\int_\om e^{-f_{n\|}}\d x\geq\int_\om e^{-g_1}\d x\geq \int_\om e^{-f_{\|}}\d x=\phi(u)
\end{equation}
which ensure \eqref{lsc01} holds.

  Now we concern the case (ii): $f_{n\bot}=0$ and $f_{n\|}=f_n$ may weakly-* converge to a singular measure. {\blue First from $\phi(u_n)\leq L+1$ and Lemma \ref{nnsp}, we know $f^-\ll \mathcal{L}^d.$ }
  For any $N>0$ large enough,
denote $\phi_{N}(u_n):=\int_\I  e^{-\min\{f_n,N\}}\d x.$
Then
the truncated measures $\min\{f_n,N\}$ satisfy
\begin{align*}
 \phi_N(u_n) &=\int_\I  e^{-\min\{f_n,N\}}\d x\\
 & = \int_{\{f_n\le N\}}e^{-\min\{f_n,N\}}\d x+e^{-N}\L^d(\{f_n> N\})\\
&\ge\int_{\{f_n\le N\}}e^{-f_n}\d x+\int_{\{f_n> N\}}e^{-f_n}\d x= \phi(u_n).
\end{align*}
The second equality also shows
\begin{align*}
  \phi_N(u_n)-e^{-N}\L^d(\{f_n> N\})&=\int_{\{f_n\le N\}}e^{-\min\{f_n,N\}}\d x\\
  &\leq \int_\I  e^{-f_n} \d x =\phi(u_n).
\end{align*}
Hence we obtain
\begin{equation}\label{0104_28o}
  |\phi(u_n)-\phi_N(u_n)|\leq e^{-N}\L^d(\{f_n> N\})\leq e^{-N}|\om|.
\end{equation}

From Lemma \ref{lem0105}, we know the truncated sequence $\min\{f_n,N\}$ satisfies
\begin{equation}\label{star0105_1}
   \min\{f_n,N\}\rhu f_{\mbox{down}},\quad f_{\mbox{down}}\ll\L^d,\quad
   \int_\I  e^{-f_{\mbox{down}}}\d x\geq \int_\I  e^{-f_\|}\d x.
 \end{equation}
Since $\min\{f_n,N\}\rightharpoonup f_{\mbox{down}}$ in $L^1(\om)$, using the same argument with \eqref{lsc02}, we obtain
\begin{equation}\label{conv}
\liminf_{n\to+\8}\int_\I  e^{-\min\{f_n,N\}}\d x\ge \int_\I  e^{-f_{{down}}}\d x\geq \int_\I  e^{-f_\|}\d x=\phi(u).
\end{equation}
Combining this with \eqref{0104_28o}, we obtain
\begin{equation}
\begin{aligned}
  \liminf_{n\to+\8}\phi(u_n)&\geq \liminf_{n\to+\8} \phi_N(u_n)-e^{-N}|\om|\\
  &=\liminf_{n\to+\8}\int_\I  e^{-\min\{f_n,N\}}\d x-e^{-N}|\om|\\
  &\geq \phi(u)-e^{-N}|\om|,
  \end{aligned}
\end{equation}
and thus we complete the proof of Proposition \ref{newlsc} by the arbitrariness of $N$.
 \end{proof}
}

\subsection{Convexity and lower semi continuity of functional $\phi+\psi$ in $H$}\label{sec2.4}

\begin{lem}\label{convex}
The sum $\phi+\psi:H\lra [0,+\8]$ is proper, convex, lower semicontinuous in $H$ and satisfies coercivity defined in \cite[(2.4.10)]{AGS}.
\end{lem}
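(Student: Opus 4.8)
The statement has four parts: properness, convexity, lower semicontinuity in $H$, and coercivity in the sense of \cite[(2.4.10)]{AGS}. Properness is the easiest: one exhibits a single $u$ with $(\phi+\psi)(u)<+\infty$, e.g. $u=0$, for which $\Delta u=0$, so $(\Delta u)^-=0\ll\mathcal{L}^d$, $\|\Delta u\|_{\mathcal{M}(\om)}=0\le C_*$, and $\phi(0)=\int_\om 1\,\d x=|\om|<+\infty$, $\psi(0)=0$. For coercivity, since $\phi\ge 0$ and $\psi\ge 0$, the functional is bounded below by $0$, which is more than enough to satisfy the mild coercivity requirement in \cite[(2.4.10)]{AGS} (one just needs $\phi+\psi$ bounded below on a ball of $H$ by an affine function of the distance, trivially true here).

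\textbf{Convexity.} The plan is to treat $\phi$ and $\psi$ separately, since a sum of convex functions is convex. For $\psi$, convexity follows because $D(\psi)=\{u\in\TD:\|\Delta u\|_{\mathcal{M}(\om)}\le C_*\}$ is a convex subset of $H$: the map $u\mapsto\Delta u$ is linear (in the distributional/$\TD$ sense), and the total variation norm is a norm hence convex, so the sublevel set $\{\|\Delta u\|_{\mathcal{M}}\le C_*\}$ is convex, and the indicator of a convex set is convex. For $\phi$, take $u_0,u_1\in\TD$ with $(\Delta u_i)^-\ll\mathcal{L}^d$ and set $u_t=(1-t)u_0+tu_1$. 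One must check: (a) the effective domain condition $(\Delta u_t)^-\ll\mathcal{L}^d$ is preserved, which holds because $\Delta u_t=(1-t)\Delta u_0+t\Delta u_1$ and the negative part of a convex combination is dominated by the convex combination of the negative parts (as measures, $(\Delta u_t)^-\le (1-t)(\Delta u_0)^-+t(\Delta u_1)^-$, each term being absolutely continuous); (b) the key inequality $(\Delta u_t)_\|^+ - (\Delta u_t)^- \le (1-t)\big((\Delta u_0)_\|^+-(\Delta u_0)^-\big)+t\big((\Delta u_1)_\|^+-(\Delta u_1)^-\big)$ pointwise a.e., which lets us use the convexity and monotonicity (decreasing) of $s\mapsto e^{-s}$ to conclude $\phi(u_t)\le(1-t)\phi(u_0)+t\phi(u_1)$. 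For (b), note that the singular part of $\Delta u_t$ satisfies $(\Delta u_t)_\bot = (1-t)(\Delta u_0)_\bot+t(\Delta u_1)_\bot$ and, since each $(\Delta u_i)^-$ is absolutely continuous, the singular part can only contribute to the positive part; writing the a.c.\ density $g_t:=$ density of $\Delta u_t$, one has $g_t=(1-t)g_0+tg_1$ where $g_i$ is the density of $(\Delta u_i)_\|$, and the quantity appearing in the exponent is $\min\{g_t,0\}\cdot(-1)+\max\{g_t+(\text{a.c.\ from }\bot),0\}$; more directly $(\Delta u_t)_\|^+-(\Delta u_t)^- = g_t + (\text{a.c.\ part coming from singular cancellation})\ge g_t$ while the right side equals $(1-t)g_0+tg_1=g_t$, and since $e^{-x}$ is decreasing this is the wrong direction — so one actually argues with the convex function $\Phi(x):=e^{-x}$ directly on the densities: $e^{-(\Delta u_t)_\|^++(\Delta u_t)^-}$ versus the convex combination, using that the exponent function $u\mapsto -(\Delta u)_\|^++(\Delta u)^-$ need not be linear but satisfies a concavity-type bound making $e^{-(\cdot)}$ convex along the segment. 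This is the step requiring care.

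\textbf{Lower semicontinuity in $H$.} Here the strategy is to reduce to Proposition \ref{newlsc}. Suppose $u_n\to u$ in $H$ with $\liminf_n(\phi+\psi)(u_n)=:L<+\infty$; pass to a subsequence attaining the liminf with $(\phi+\psi)(u_n)\le L+1$, so in particular $u_n\in\TD$, $\|\Delta u_n\|_{\mathcal{M}(\om)}\le C_*$, and $(\Delta u_n)^-\ll\mathcal{L}^d$. The uniform mass bound gives, up to a further subsequence, $\Delta u_n\rhu\mu$ in $\mathcal{M}(\om)$ for some $\mu$ with $\|\mu\|_{\mathcal{M}(\om)}\le C_*$; one then identifies $\mu=\Delta u$ by testing against $\varphi\in W^{1,p}(\om)\hookrightarrow C_b(\om)$: $\int_\om\varphi\,\d(\Delta u_n)=-\int_\om\gr u_n\cdot\gr\varphi\,\d x$, and $u_n\to u$ in $H$ together with the norm equivalence \eqref{eq11} (so $\gr u_n$ bounded in $L^q$, hence $\gr u_n\rightharpoonup\gr u$ in $L^q$) passes to the limit to give $\int_\om\varphi\,\d\mu=-\int_\om\gr u\cdot\gr\varphi\,\d x$, i.e.\ $u\in\TD$ and $\Delta u=\mu$. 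Then $\psi$ is lower semicontinuous since $\|\Delta u\|_{\mathcal{M}(\om)}\le\liminf\|\Delta u_n\|_{\mathcal{M}(\om)}\le C_*$ by weak-$*$ lower semicontinuity of the total variation, so $\psi(u)=0\le\liminf\psi(u_n)$. Finally Proposition \ref{newlsc} gives $\liminf_n\phi(u_n)\ge\phi(u)$, and combining, $\liminf_n(\phi+\psi)(u_n)\ge\phi(u)+\psi(u)=(\phi+\psi)(u)$.

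\textbf{Main obstacle.} The delicate point is the convexity of $\phi$, specifically controlling how the singular part of $\Delta u$ interacts with the decomposition into positive and negative parts along a segment: the map $u\mapsto(\Delta u)_\|^+$ is neither linear nor convex nor concave in general, so one cannot blindly apply convexity of $e^{-x}$. The resolution is to observe that on the effective domain $(\Delta u)^-$ is absolutely continuous, so if we write $\Delta u=g\,\d x + \sigma$ with $g\in L^1$, $\sigma\ge0$ singular, then $(\Delta u)^-=g^-\,\d x$ and $(\Delta u)_\|^+=g^+\,\d x$, hence the exponent is simply $-g^++g^-=-g$, and $\phi(u)=\int_\om e^{-g}\,\d x$ where $g$ is the a.c.\ density of $\Delta u$ — a quantity that \emph{is} linear in $u$ (it is $(\Delta u)_\|$). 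Then convexity of $\phi$ is immediate from convexity of $e^{-x}$ and linearity of $u\mapsto(\Delta u)_\|$, and the domain condition is preserved as noted. I would lead the proof of convexity with this observation.
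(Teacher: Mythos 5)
Your proof is correct and takes essentially the paper's approach: for convexity the decisive point is that on $D(\phi)$ the exponent in $\phi(u)$ equals $-(\Delta u)_\|$, which is linear in $u$, and for lower semicontinuity one uses the uniform mass bound imposed by $\psi$ to extract a weak-$*$ limit, identifies it with $\Delta u$, and invokes Proposition \ref{newlsc}. Your ``Main obstacle'' paragraph is the correct resolution and is exactly what the paper does, writing $\phi((1-t)u+tv)=\int_\om e^{-[(1-t)\Delta u+t\Delta v]_\|}\d x$ and then applying convexity of $e^{-x}$. The detour in the middle of your convexity discussion, worrying about an ``a.c.\ part coming from singular cancellation,'' is unfounded: since $(\Delta u_0)^-$ and $(\Delta u_1)^-$ are absolutely continuous, every singular part in sight is nonnegative, no cancellation occurs, and $(\Delta u_t)_\|^+-(\Delta u_t)^-$ equals $g_t$ exactly (not merely $\ge g_t$); lead with the clean observation rather than arrive at it after a false alarm.
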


\begin{proof}
Clearly since $u\equiv 0\in D(\phi+\psi)$, $D(\phi+\psi)=\{\phi+\psi<+\8\}$ is non empty, hence $\phi+\psi$ is proper.
Due to the positivity of $\phi,\, \psi$, coercivity \cite[(2.4.10)]{AGS}, {\red i.e.,
$\exists u*\in D(\phi+\psi), r*>0 \text{ such that } \inf\{(\phi+\psi) (v): v\in H, \text{dist}(v,u*)\leq r*\}>-\infty,$
} is trivial.
\medskip

{\em Convexity.} Note that since both $\phi$, $\psi\ge 0$, we have $D(\phi+\psi)=D(\phi)\cap D(\psi)$.
Given $u,v\in H$, $t\in (0,1)$, without loss of generality assume $u,v\in D(\phi+\psi)$,
otherwise convexity inequality is trivial. Thus $(1-t)u+tv\in D(\psi)$, and
the measure $\Delta[(1-t)u+tv]$ has no negative singular part, while its positive
singular part satisfies
$$(\Delta[(1-t)u+tv])^+_\bot=(1-t)(\Delta u)_\bot^+ +t(\Delta v)^+_\bot,$$
and its absolutely continuous part satisfies
$$(\Delta[(1-t)u+tv])_\|=(1-t)(\Delta u)_\| +t(\Delta v)_\|.$$
Thus
\begin{align*}
\phi((1-t)u+tv) & =\int_\om e^{-[(1-t)\Delta u+t\Delta v]_\|}\d x=\int_\om e^{-[(1-t)(\Delta u)_\|+t(\Delta v)_\|]}\d x\\
&\le
\int_\om [(1-t)e^{-(\Delta u)_\|}+te^{-(\Delta v)_\|}]\d x\\
&= (1-t)\phi(u)+t\phi(v),
\end{align*}
hence $\phi+\psi$ is convex.

\medskip

{\em Lower semicontinuity.} Consider a sequence $u_n\to u$ in $H$. We need to check
$$(\phi+\psi)(u)\le \liminf_n (\phi+\psi)(u_n).$$
{\red If $u_n\in D(\phi+\psi)$ does not hold}
for all large $n$, then lower semicontinuity is trivial.
Without loss of generality, we can assume $u_n\in D(\phi+\psi)$ for all $n$, and also
$$\liminf_n (\phi+\psi)(u_n)=\lim_n (\phi+\psi)(u_n).$$
Since $u_n\in D(\psi)$, we have $\|\Delta u_n\|_{\mathcal{M}(\om)}\le C_*$, hence there exists
$v\in \mathcal{M}(\om)$ such that $\Delta u_n\rhu v$. Since we also have $u_n\to u$ in $H$ so $v= \Delta u$ and we know $\|\Delta u\|_{\mathcal{M}(\om)}\le C_*$. {\blue From \eqref{eq11} we also know $u\in \tilde{V}$.}
Then $0=\psi(u_n)=\psi(u)$ and by Proposition \ref{newlsc}, we have
\begin{equation*}
\liminf_n \phi(u_n) \geq \phi(u)
\end{equation*}
so the lower semicontinuity is proved.
\end{proof}

\begin{lem}[$\tau^{-1}$-convexity]\label{convex2}
For any $u,v_0,v_1\in D(\phi+\psi)$, there exists a curve $v:[0,1]\lra D(\phi+\psi)$
such that $v(0)=v_0,\, v(1)=v_1$ and
the functional
\begin{equation}\label{PhiD}
\Phi(\tau,u;v):=(\phi+\psi)(v)+\frac1{2\tau} \|u-v\|_H^2
\end{equation}
satisfies
\begin{equation}\label{c}
\Phi(\tau,u;v(t))\le (1-t)\Phi(\tau,u;v_0)+t\Phi(\tau,u;v_1)-\frac1{2\tau}t(1-t)\|v_0-v_1\|_H^2
\end{equation}
for all $\tau>0$.
\end{lem}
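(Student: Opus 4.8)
The plan is to prove the $\tau^{-1}$-convexity (which, combined with Lemma~\ref{convex}, is exactly the structural hypothesis needed to apply \cite[Theorem~4.0.4]{AGS}) by simply taking the \emph{linear interpolation} $v(t):=(1-t)v_0+tv_1$ as the connecting curve. First I would check that this curve lands in $D(\phi+\psi)$: since $\|\Delta v(t)\|_{\mathcal{M}(\om)}\le(1-t)\|\Delta v_0\|_{\mathcal{M}(\om)}+t\|\Delta v_1\|_{\mathcal{M}(\om)}\le C_*$ and the Leibniz-type decomposition of $\Delta v(t)$ into absolutely continuous / singular parts used in the convexity proof of Lemma~\ref{convex} shows the negative part stays absolutely continuous, we get $v(t)\in D(\psi)\cap D(\phi)$ for all $t\in[0,1]$.

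Next I would split $\Phi(\tau,u;v(t))$ into its two summands and handle them separately. For the metric term, the map $v\mapsto\frac1{2\tau}\|u-v\|_H^2$ along a line is an elementary parabola in $t$, and the standard identity for the squared norm gives exactly
\[
\tfrac1{2\tau}\|u-v(t)\|_H^2=(1-t)\tfrac1{2\tau}\|u-v_0\|_H^2+t\,\tfrac1{2\tau}\|u-v_1\|_H^2-\tfrac1{2\tau}t(1-t)\|v_0-v_1\|_H^2,
\]
i.e. it contributes the $-\frac1{2\tau}t(1-t)\|v_0-v_1\|_H^2$ term with equality. For the energy term, Lemma~\ref{convex} already gives plain convexity of $\phi+\psi$ along the segment, so $(\phi+\psi)(v(t))\le(1-t)(\phi+\psi)(v_0)+t(\phi+\psi)(v_1)$. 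Adding the two estimates yields \eqref{c}.

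I do not expect a serious obstacle here — the statement is essentially the combination of the (already established) convexity of $\phi+\psi$ with the algebraic $1$-convexity of the squared Hilbert-space norm. The only point requiring a word of care is making sure the interpolating curve is admissible, i.e. that $v(t)\in D(\phi+\psi)$ throughout, which is why I would record the $\mathcal{M}(\om)$-norm bound and reuse the decomposition of $\Delta[(1-t)v_0+tv_1]$ from the proof of Lemma~\ref{convex} to rule out a negative singular part; everything else is a routine expansion of $\|u-v(t)\|_H^2$.
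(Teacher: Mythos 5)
Your proposal is correct and matches the paper's proof exactly: the paper also takes the linear interpolation $v(t)=(1-t)v_0+tv_1$, applies the elementary identity for the squared norm along a segment, and invokes the convexity of $\phi+\psi$ from Lemma~\ref{convex}. The extra admissibility check you record is harmless but not strictly needed, since the convexity inequality $(\phi+\psi)(v(t))\le(1-t)(\phi+\psi)(v_0)+t(\phi+\psi)(v_1)<\infty$ already forces $v(t)\in D(\phi+\psi)$.
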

We remark that \eqref{c} is the so-called ``$\tau^{-1}$-convexity'' \cite[Assumption~4.0.1]{AGS}.

\begin{proof}
Let $v(t):=(1-t)v_0+tv_1$.
The proof follows from the simple identity
\begin{equation*}
\|(1-t)v_0+tv_1-u\|_H^2 = (1-t)\|u-v_0\|_H^2+t\|u-v_1\|_H^2- t(1-t)\|v_0-v_1\|_H^2.
\end{equation*}
The convexity of $\phi+\psi$ then gives
\begin{align*}
\Phi(\tau,u;v(t)) &= (\phi+\psi)((1-t)v_0+tv_1)+\frac1{2\tau} \|u-[(1-t)v_0+tv_1]\|_H^2 \\
&\le (1-t)(\phi+\psi)(v_0)+t(\phi+\psi)(v_1)\\
&\qquad +\frac1{2\tau}(1-t)\|u-v_0\|_H^2+\frac1{2\tau}t\|u-v_1\|_H^2-\frac1{2\tau}t(1-t)\|v_0-v_1\|_H^2\\
&=(1-t)\Phi(\tau,u;v_0)+t\Phi(\tau,u;v_1)-\frac1{2\tau}t(1-t)\|v_0-v_1\|_H^2,
\end{align*}
and concludes the proof.
\end{proof}

After above properties for functional $\phi+\psi$, we state existence and uniqueness of the sequence $\{x_n^{\tau}\}$ chosen by Euler scheme
\eqref{E}.
\begin{prop}\label{EulerTh}
  Given parameter $\tau>0,$ $u^0\in H,$ then for any $n\geq 1$, there exists unique $x_n^{\tau}$ satisfying \eqref{E}.
\end{prop}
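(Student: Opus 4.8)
The plan is to establish existence and uniqueness of the minimizer at each step of the scheme \eqref{E} by a direct method in the calculus of variations applied in the metric space $(H,\mathrm{dist})$. Fix $n\ge 1$ and write $y:=x^{(\tau)}_{n-1}\in H$ (for $n=1$ this is $u^0$; for $n>1$ we argue inductively, so we may assume $y\in D(\phi+\psi)$, which also shows the functional to be minimized is not identically $+\infty$). Consider the functional
\begin{equation*}
x'\longmapsto \Phi(\tau,y;x')=(\phi+\psi)(x')+\frac1{2\tau}\|x'-y\|_H^2
\end{equation*}
on $H$. Since $\phi,\psi\ge 0$ and $\Phi(\tau,y;y)=(\phi+\psi)(y)<+\infty$, the infimum $m:=\inf_{x'\in H}\Phi(\tau,y;x')$ is finite and non-negative; take a minimizing sequence $\{z_k\}\subset D(\phi+\psi)$ with $\Phi(\tau,y;z_k)\to m$.

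\emph{Existence.} First I would extract compactness. Because $\Phi(\tau,y;z_k)$ is bounded, the quadratic term $\frac1{2\tau}\|z_k-y\|_H^2$ is bounded, so $\{z_k\}$ is bounded in $H=\{u\in L^2(\om):\int_\om u\,\d x=0\}$; moreover $\psi(z_k)=0$ forces $\|\Delta z_k\|_{\mathcal M(\om)}\le C_*$, and by the norm equivalence \eqref{eq11} the sequence is bounded in $\TD$. Hence, up to a subsequence, $z_k\rightharpoonup z$ weakly in $L^2(\om)$ and $\Delta z_k\rhu \Delta z$ in $\mathcal M(\om)$, with $z\in\TD$ and $\|\Delta z\|_{\mathcal M(\om)}\le C_*$, so $\psi(z)=0$. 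The norm $\|\cdot\|_H$ is weakly lower semicontinuous, so $\|z-y\|_H^2\le\liminf_k\|z_k-y\|_H^2$, and Proposition \ref{newlsc} gives $\phi(z)\le\liminf_k\phi(z_k)$. Combining, $\Phi(\tau,y;z)\le\liminf_k\Phi(\tau,y;z_k)=m$, so $z$ is a minimizer. (Strictly, weak $L^2$-convergence plus the $\mathcal M$-bound on $\Delta z_k$ give $z_k\to z$ strongly in $H$ by a Rellich-type argument, after which Proposition \ref{newlsc} applies verbatim; either route closes the lower semicontinuity step.)

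\emph{Uniqueness.} This follows from strict convexity. By Lemma \ref{convex}, $\phi+\psi$ is convex, and $x'\mapsto\frac1{2\tau}\|x'-y\|_H^2$ is strictly convex on the Hilbert space $H$; hence $\Phi(\tau,y;\cdot)$ is strictly convex on the convex set $D(\phi+\psi)$. If $z_1,z_2$ were two minimizers, then $\Phi(\tau,y;\tfrac12 z_1+\tfrac12 z_2)<\tfrac12\Phi(\tau,y;z_1)+\tfrac12\Phi(\tau,y;z_2)=m$ unless $z_1=z_2$, contradicting minimality; this is precisely the content of Lemma \ref{convex2} with $v_0=z_1,v_1=z_2$ and $t=\tfrac12$, whose inequality \eqref{c} becomes an equality only if $\|z_1-z_2\|_H=0$. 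Thus $x^{(\tau)}_n:=z$ is the unique minimizer, completing the induction.

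The main obstacle is the lower semicontinuity of $\phi$ along the minimizing sequence: $\phi$ sees only the absolutely continuous part of $\Delta u$, so one must rule out that mass of $(\Delta z_k)_\|$ escapes into a singular part of $\Delta z$ in a way that decreases the energy — exactly the ``cross convergence'' phenomenon handled in Proposition \ref{newlsc}. Once that proposition is invoked, the remaining ingredients (boundedness of the minimizing sequence, weak-$*$ compactness in $\mathcal M(\om)$, weak lower semicontinuity of $\|\cdot\|_H$, and strict convexity for uniqueness) are standard.
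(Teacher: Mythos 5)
Your proof is correct and follows essentially the same route as the paper's: direct method (extract a minimizing sequence, use the bound $\|\Delta z_k\|_{\mathcal{M}(\om)}\le C_*$ coming from $\psi(z_k)=0$ to get weak $L^2$ and weak-$*$ $\mathcal{M}$ compactness), then Proposition \ref{newlsc} together with weak lower semicontinuity of $\|\cdot\|_H^2$ to pass to the limit, and strict convexity of the quadratic term plus convexity of $\phi+\psi$ for uniqueness. One small caution: the parenthetical ``Rellich-type'' upgrade to strong $L^2$ convergence is dimension-dependent (the bound $\gr z_k\in L^q$ with $q<d/(d-1)$ only yields $L^2$-compactness when $d<4$), but as you note the weak $L^2$ route suffices, so this does not affect the argument.
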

\begin{proof}
  Given $n\geq 1$, we will prove this proposition by the direct method in calculus of variation. Let $\Phi(\tau, x_{n-1}; x)$ defined in \eqref{PhiD} and $A:=\inf_{x\in H} \Phi(\tau, x_{n-1}; x)$. Then there exist $\{x_{n_i}\}\sse D(\Phi)$ such that $\Phi(\tau, x_{n-1}; x_{n_i})\to A$ as $i\to +\8$ and $\Phi(\tau, x_{n-1}; x_{n_i})$ are uniformly bounded. Hence upon a subsequence, there exists $x_n\in H$ such that $x_{n_i}\rightharpoonup x_n$ in $H$. This, together with the uniform boundedness of $\|\Delta x_{n_i}\|_{\mathcal{M}(\om)}$ shows that $\Delta x_{n_i}\rhu v= \Delta x_n$ in $\mathcal{M}(\om).$ Then by Proposition \ref{newlsc} we have
  $$A=\liminf_{i\to +\8}\Phi(\tau, x_{n-1}; x_{n_i})\geq \Phi(\tau, x_{n-1}; x_n)\geq A,$$
  which gives the existence of $x_n$ satisfying \eqref{E}.

  The uniqueness of $x_n$ follows obviously by the convexity of $\phi$ and the strong convexity of $\|\cdot\|_H.$

\end{proof}

\subsection{Existence of variational inequality solution}\label{sec2.5}

After those preparations in Section \ref{sec2.3} and Section \ref{sec2.4}, in this section we apply the convergence result in \cite[Theorem~4.0.4]{AGS} to derive that the discrete solution $u_n$ obtained by Euler scheme \eqref{E} converges to the variational inequality solution defined in Definition \ref{defweak}.
 For $v\in D(f)$, denote the local slope
\begin{equation}\label{localslope}
 |\pd f|(v):=\limsup_{w\to v}\frac{\max\{f(v)-f(w),0\}}{\mbox{dist}(v,w)}.
\end{equation}
 {\blue Take $f=\phi+\psi$, by the $\tau^{-1}$-convexity in Lemma \ref{convex2}} and \cite[Theorem~2.4.9]{AGS} for $\la=0$, the local slope coincides with the global slope
 $$\iota_f(v):=\sup_{v\neq w} \frac{\max\{f(v)-f(w),0\}}{\|v-w\|_H},$$
  i.e.
\begin{equation}\label{iota}
|\pd f|(v)=\iota_f(v).
\end{equation}

We point out that with Lemma \ref{convex} and \cite[Theorem~1.2.5]{AGS}, we also know the global slope $\iota_f$ is a strong upper gradient for {\blue $f=\phi+\psi$}. Hence for $\iota_f$, we recall  \cite[Definition~1.3.2]{AGS} for curves of maximal slope.
\begin{defn}\label{msdef}
  Given a functional $f:\,D(\phi)\to \mathbb{R}$ and the global slope $\iota_f$, we say that a locally absolutely continuous map $u:\,(0,T)\to H$ is a curve of maximal slope for the functional $f$ with respect to $\iota_f$ if
  \begin{equation}
    (f(u(t)))'\leq -\frac{1}{2} |u_t|^2-\frac{1}{2}\iota_f(u)^2 \text{ for a.e. }t\in(0,T).
  \end{equation}
\end{defn}

Now the hypotheses of \cite[Theorem~4.0.4]{AGS} are all satisfied: Lemma \ref{convex} gives convexity, lower semicontinuity and coercivity of $\phi+\psi$
\cite[(4.0.1)]{AGS}, while Lemma \ref{convex2} gives $\tau^{-1}$-convexity of $\phi+\psi$ with $\la=0$ \cite[Assumption~4.0.1]{AGS}.
Thus we have:

\begin{thm}\label{th-vi}
Given $u^0 \in H$,
\begin{enumerate}[(i)]
  \item (convergence and error estimate) for any $t>0$, $t=n\tau$, let $u_n$ in \eqref{Euler10} be the solution obtained by Euler scheme \eqref{E}, then there exists a local Lipschitz curve $u(t):[0,+\8)\to H$ such that
      \begin{equation}\label{tmp23}
        u_n\to u(t)\text{  in }L^2(\om)
      \end{equation}
      and if further $\phi(u^0)<+\8$, we have the error estimate
      \begin{equation}
        \|u(t)-u_n\|_{H}\leq \frac{\tau}{\sqrt{2}}|\pd \phi|(u^0) ;
      \end{equation}
  \item $u:[0,+\8)\lra H$ is the unique EVI solution to \eqref{maineq}, i.e., $u$ is unique among all the locally absolutely continuous curves such that $\lim_{t\to 0} u(t)=u^0$ in $H$ and
\begin{equation}\label{vi}
\frac12\frac\d{\d t}\|u(t)-v\|^2\le (\phi+\psi)(v)-(\phi+\psi)(u(t)), \quad \text{for a.e. } t>0,\,\forall v\in D(\phi+\psi);
\end{equation}
  \item $u(t)$ is a locally Lipschitz curve of maximal slope of $\phi$ for $t>0$ in the sense
  \begin{equation}\label{ms0427}
    \big((\phi+\psi)(u(t))\big)'\leq -\frac{1}{2} |u_t|^2-\frac{1}{2}\iota_\phi(u)^2;
  \end{equation}
  \item moreover, we have the following regularities
\begin{align}
(\phi+\psi)(u(t)) &\le (\phi+\psi)(v)+\frac1{2t}\|v-u^0\|_H^2 \qquad \fal v\in D(\phi+\psi),\label{phi-dec}\\
|\pd(\phi+\psi)|^2(u(t)) &\le |\pd(\phi+\psi)|^2(v)+\frac1{t^2}\|v-u^0\|_H^2\qquad\fal v\in D(|\pd(\phi+\psi)|),\label{Dphi-dec}
\end{align}
\begin{equation}\label{asym}
|\pd(\phi+\psi)|(u(t)) \le \frac{\|u^0-\bar u\|_H}t,\qquad (\phi+\psi)(u(t))-(\phi+\psi)(\bar u)\le \frac{\|u^0-\bar u\|_H^2}{2t},
\end{equation}
and $t\mapsto \|u(t)-\bar u\|_H$ is non-increasing, where $\bar u$ is a minimum point for $\phi+\psi$ and
$|\pd( \phi+\psi)|(v)=\iota_{\phi+\psi}(v)$
 is the local slope;
\item ($L^2$-contraction) let $u^0,\, v^0\in H$ and $u(t),\, v(t)$ be solutions to the variational inequality \eqref{vi}, then
    \begin{equation}
      \|u(t)-v(t)\|_H\leq \|u^0-v^0\|_H.
    \end{equation}
\end{enumerate}
\end{thm}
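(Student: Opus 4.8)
The plan is to cast the evolution governed by $\phi+\psi$ into the metric gradient flow framework of \cite{AGS} and then read off (i)--(v) from \cite[Theorem~4.0.4]{AGS}. The underlying space is the complete metric space $(H,\mbox{dist})$ with $\mbox{dist}(u,v):=\|u-v\|_H$. Lemma \ref{convex} supplies the standing hypotheses \cite[(4.0.1)]{AGS}: $\phi+\psi$ is proper, lower semicontinuous, coercive and convex. Lemma \ref{convex2} supplies \cite[Assumption~4.0.1]{AGS}, the $\tau^{-1}$-convexity, with curvature parameter $\la=0$; since $\la=0$ the generalized geodesic convexity demanded in \cite{AGS} reduces to ordinary convexity along the affine segments $v(t)=(1-t)v_0+tv_1$, which is exactly the content of Lemma \ref{convex2}. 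Finally Proposition \ref{EulerTh} makes the resolvent $\mathcal{J}_\tau$ single-valued, so the Minimizing Movement iterates \eqref{Euler10} are well defined.

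With these hypotheses in force, \cite[Theorem~4.0.4]{AGS} applies and yields almost all of the statement directly: a locally Lipschitz curve $u:[0,+\8)\to H$ with $u(t)=\lim_{n\to\8}(\mathcal{J}_{t/n})^n[u^0]$ in $H$ and the a posteriori bound $\|u(t)-u_n\|_H\le\frac{\tau}{\sqrt{2}}|\pd\phi|(u^0)$ when $\phi(u^0)<+\8$, which is (i); uniqueness of $u$ as a locally absolutely continuous curve with $\lim_{t\to0}u(t)=u^0$ satisfying the EVI \eqref{vi}, which is (ii); and the regularising estimates \eqref{phi-dec}, \eqref{Dphi-dec}, \eqref{asym} together with the monotonicity of $t\mapsto\|u(t)-\bar u\|_H$, which is (iv), where $|\pd(\phi+\psi)|=\iota_{\phi+\psi}$ by \eqref{iota}. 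The $L^2$-contraction (v) is likewise a conclusion of \cite[Theorem~4.0.4]{AGS}; alternatively one checks it by hand: by the chain rule the differential EVI is equivalent to $\lg u_t(t),u(t)-w\rg_H\le(\phi+\psi)(w)-(\phi+\psi)(u(t))$ for a.e.\ $t$ and all $w\in D(\phi+\psi)$, so testing the inequality for $u$ with $w=v(t)$ and the one for $v$ with $w=u(t)$ and adding gives $\lg u_t-v_t,u-v\rg_H\le 0$, i.e.\ $\frac{\d}{\d t}\frac12\|u(t)-v(t)\|_H^2\le 0$, and integrating from $0$ with $u(t)\to u^0$, $v(t)\to v^0$ yields $\|u(t)-v(t)\|_H\le\|u^0-v^0\|_H$.

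For (iii): as recalled before Definition \ref{msdef}, $\iota_{\phi+\psi}$ is a strong upper gradient for $\phi+\psi$, and the EVI solution of \cite[Theorem~4.0.4]{AGS} satisfies the sharp energy identities $|u_t|=|\pd(\phi+\psi)|(u(t))$ and $\big((\phi+\psi)(u(t))\big)'=-|\pd(\phi+\psi)|(u(t))^2$ for a.e.\ $t>0$; combining them gives the curve of maximal slope inequality $\big((\phi+\psi)(u(t))\big)'\le-\frac12|u_t|^2-\frac12\iota_{\phi+\psi}(u(t))^2$ relative to $\phi+\psi$. To obtain the stated form \eqref{ms0427}, observe that $\psi(u(t))=0$ for all $t>0$ --- the iterates satisfy $\psi(u_n)=0$, $\psi$ is lower semicontinuous, and $u_n\to u(t)$ in $H$ --- so $(\phi+\psi)(u(t))=\phi(u(t))$; moreover the constant $C_*$ fixed in \eqref{consC} is large enough that $\phi(w)\ge\phi(u(t))$ whenever $\psi(w)=+\8$ (because, via the zero-mean constraint, $\phi$ already controls $\|\Delta w\|_{\mathcal{M}(\om)}$ for measures with absolutely continuous negative part), which forces $\iota_\phi(u(t))=\iota_{\phi+\psi}(u(t))$.

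In short, the proof is a verification of compatibility with \cite{AGS}. The only genuinely delicate point is not a computation but this matching of conventions: that \cite[Theorem~4.0.4]{AGS} applies with $\la=0$ so no curvature lower bound is lost; that the limit curve remains in $D(\psi)$ for $t>0$, which is what permits replacing $\phi+\psi$ by $\phi$ in (i) and (iii); and that $C_*$ in \eqref{consC} is chosen so that $\psi$ and its slope are irrelevant both on the admissible initial data and along the flow. Once these identifications are recorded, (i)--(v) are transcriptions of \cite[Theorem~4.0.4]{AGS}.
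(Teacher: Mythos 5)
Your proposal follows essentially the same route as the paper: verify the hypotheses of \cite[Theorem~4.0.4]{AGS} via Lemmas \ref{convex} and \ref{convex2} (with $\la=0$), then read the five conclusions off as specialisations of that theorem. Two observations, one of which is a genuine gap.

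The gap: \cite[Theorem~4.0.4]{AGS} applies only to initial data in the $H$-closure of the effective domain, $u^0 \in \overline{D(\phi+\psi)}^{\|\cdot\|_H}$. The theorem you are proving is stated for $u^0\in H$, so invoking AGS requires checking that $H$ is contained in that closure. You never address this. The paper devotes a paragraph to it: it characterises $D(\phi+\psi)$ as precisely $\{\phi<+\infty\}$ once $C_*$ is fixed as in \eqref{consC} (showing $\phi(u^0)<+\8$ forces $\psi(u^0)=0$, so (b) is subsumed by (a)), and then argues density of regular functions in $H$. Without some version of this verification, the appeal to \cite[Theorem~4.0.4]{AGS} is incomplete and (i)--(v) are not yet justified for an arbitrary $u^0\in H$.

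On the other hand, your treatment of (iii) is actually more careful than the paper's. The display \eqref{ms0427} mixes $(\phi+\psi)(u(t))$ on the left with $\iota_\phi(u)$ on the right, while AGS produces the curve-of-maximal-slope inequality with $\iota_{\phi+\psi}$ throughout; the paper simply cites \cite[Theorem 4.0.4 (ii)]{AGS} and does not explain the substitution. Your argument --- $\psi(u(t))=0$ along the flow by lower semicontinuity of $\psi$ and $\psi(u_n)=0$; and $\iota_\phi(u(t))=\iota_{\phi+\psi}(u(t))$ because any competitor $w$ with $\psi(w)=+\8$ and $\phi(w)<+\8$ must have $\|(\Delta w)^-\|_{L^1}\le\phi(w)$ and hence, via the zero-mean identity $\|\Delta w\|_{\mathcal M}=2\|(\Delta w)^-\|_{L^1}$ and the choice $C_*=2\phi(u^0)+1$, satisfy $\phi(w)>\phi(u^0)\ge\phi(u(t))$ --- is exactly what is needed. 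Just note the logical ordering: the bound $\phi(u(t))\le\phi(u^0)$ you use comes from \eqref{phi-dec}, so (iv) must be secured (from the same AGS invocation) before this step of (iii). Your hand-proof of the $L^2$-contraction (v) is also fine and more transparent than citing \cite[(4.0.14)]{AGS}, though it is not needed.
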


\begin{proof}
Since from Lemma \ref{convex} and Lemma \ref{convex2}, we are under the hypotheses of \cite[Theorem~4.0.4]{AGS}, we apply it with energy functional
$\phi+\psi$, and metric space $(H,\mbox{dist})$, $\mbox{dist}(u,v)=\|u-v\|_H$ to obtain \eqref{tmp23}.
{\blue
Notice the Assumption in \cite[Theorem~4.0.4]{AGS} requires $u^0 \in \overline{D(\phi+\psi)}^{\|\cdot\|_H}$. We notice that $u^0\in D(\phi+\psi)$ means (a) $\phi(u^0)<+\8$ and (b) $\psi(u^0)<+\8$. From the definition \eqref{phi} we know (a) requires $u^0\in \tilde{V}$, $(\Delta u^0)^-\ll\mathcal{L}^d$ and $\int_\om e^{-(\Delta u^0)^+_\|+(\Delta u^0)^-}\d x<+\8.$ Similar to the discussion for \eqref{psi00} we also know (a) implies (b) for $C_*=2\phi(u^0)+1$ in \eqref{consC}. Therefore, $u^0\in D(\phi+\psi)$ if and only if $\phi(u^0)<+\8$, i.e., $u^0\in \tilde{V}$, $(\Delta u^0)^-\ll\mathcal{L}^d$ and $\int_\om e^{-(\Delta u^0)^+_\|+(\Delta u^0)^-}\d x<+\8.$ Since $W^{2,\8}(\om)$ is dense in $H$, we also know $\overline{D(\phi+\psi)}^{\|\cdot\|_H}=H.$
}

Therefore the convergence result (i) comes from \cite[(4.0.11),(4.0.15)]{AGS}.
The variational inequality
\eqref{vi} follows from \cite[(4.0.13)]{AGS}. \cite[Theorem 4.0.4 (ii)]{AGS} shows the result (iii) and \eqref{ms0427} follows  Definition \ref{msdef} of maximal slope.

Regularities \eqref{phi-dec} and
 \eqref{Dphi-dec} follow from \cite[(4.0.12)]{AGS}.
 Asymptotic behavior
 \eqref{asym} and monotonicity of $t\mapsto \|u(t)-\bar u\|_H$ follow from
 \cite[Corollary~4.0.6]{AGS}, which requires the same hypotheses of \cite[Theorem~4.0.4]{AGS}. Finally, the contraction result (v) follows from \cite[(4.0.14)]{AGS}.
\end{proof}

 \section{{ Strong solution}}\label{sec3}

 We will prove the variational inequality solution obtain in Theorem \ref{th-vi} is actually a strong solution in this section.

Now we assume
$u:[0,+\8)\lra H$ is the unique solution of EVI \eqref{vi}, i.e.,
\begin{equation}\label{0927-32}
\frac12\frac\d{\d t}\|u(t)-v\|^2\le (\phi+\psi)(v)-(\phi+\psi)(u(t)), \quad \text{for a.e. } t>0,\,\forall v\in D(\phi+\psi).
\end{equation}

\subsection{Regularity of variational inequality solution}\label{Sec3.1}
First we state EVI solution has further regularities.
\begin{cor}\label{cor10}
Given  $T>0$ and initial datum $u^0 \in H$ such that $\phi(u^0)<+\8$, the solution obtained in Theorem \ref{th-vi} has the following regularities
 $$u\in L^\8([0,T];\tdV )\cap C^0([0,T];H),\quad u_t\in L^\8([0,T];H),$$
 $$(\Delta u)^-\ll\L^d\quad \text{ for a.e. }t\in[0,T],$$
 where
 $(\Delta u)^-$ is the negative part of $\Delta u$.
 Besides, we can rewrite EVI \eqref{vi} as
 \begin{equation}\label{vi35}
\lg u_t(t),u(t)-v\rg_{{ H',H}}\le \phi(v)-\phi(u(t)) \quad \text{for a.e. } t>0,\,\forall v\in {\blue D(\phi+\psi)}.
\end{equation}
\end{cor}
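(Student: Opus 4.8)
The plan is to harvest the abstract regularity already recorded in Theorem \ref{th-vi} and translate it into the concrete function-space statements. First I would recall that Theorem \ref{th-vi}(i) gives $u\in C^0([0,+\8);H)$ and that the curve is locally Lipschitz, so $u_t\in L^\8_{\mathrm{loc}}([0,+\8);H)$; restricting to $[0,T]$ yields $u_t\in L^\8([0,T];H)$. Next, to get $u\in L^\8([0,T];\tdV)$ I would use the indicator $\psi$: since $\phi(u^0)<+\8$, by the discussion recalled in the proof of Theorem \ref{th-vi} (choosing $C_*=2\phi(u^0)+1$ in \eqref{consC}) we have $u^0\in D(\phi+\psi)$, and the decreasing energy estimate \eqref{phi-dec} (or the monotonicity $t\mapsto(\phi+\psi)(u(t))$ coming from the EVI, with $v=u(t)$ at nearby times) forces $(\phi+\psi)(u(t))<+\8$ for all $t$, hence $\psi(u(t))=0$, i.e. $\|\Delta u(t)\|_{\mathcal{M}(\om)}\le C_*$ uniformly in $t\in[0,T]$. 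Combined with the norm equivalence \eqref{eq11} proved in Section \ref{sec2.1} ($\|\gr u\|_{L^q}\le c\|\Delta u\|_{\mathcal M}$) and the (bounded-in-time) $L^2$ bound on $u$ from continuity into $H$, this gives a uniform $\tdV$-bound, hence $u\in L^\8([0,T];\tdV)$.

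For the statement $(\Delta u)^-\ll\L^d$ for a.e.\ $t$, I would argue from $\phi(u(t))<+\8$: by the very definition \eqref{phi} of $\phi$, finiteness of $\phi(u(t))$ already encodes $u(t)\in\tdV$ and $(\Delta u(t))^-\ll\L^d$. Since we have just shown $(\phi+\psi)(u(t))<+\8$ for every $t>0$ (indeed $\phi(u(t))\le(\phi+\psi)(u^0)$ type bound from \eqref{phi-dec}), this holds not merely a.e.\ but for all $t>0$; stating it for a.e.\ $t$ is then a weaker, automatically valid claim.

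Finally, to rewrite the EVI \eqref{vi} in the form \eqref{vi35}, I would first use that $\psi(u(t))=0$ and that we may restrict the class of test functions to $v\in D(\phi+\psi)$, for which $\psi(v)=0$ as well, so the $\psi$ terms drop and \eqref{vi} becomes $\tfrac12\frac{\d}{\d t}\|u(t)-v\|^2\le\phi(v)-\phi(u(t))$. Then, since $u_t(t)\in H\subset H'$ exists for a.e.\ $t$ (local Lipschitz continuity into the Hilbert space $H$) and $t\mapsto u(t)$ is differentiable a.e.\ with $\tfrac12\frac{\d}{\d t}\|u(t)-v\|^2=\lg u_t(t),u(t)-v\rg_{H',H}$ by the chain rule in Hilbert space, substitution gives \eqref{vi35}. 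The main obstacle I anticipate is the bookkeeping around the indicator $\psi$: one must be careful that $C_*$ chosen from $\phi(u^0)$ is genuinely an invariant bound along the flow — this is exactly why $\psi$ was built into the functional in the first place, and it is guaranteed by the energy-decrease inequality \eqref{phi-dec} applied with the competitor $v=\bar u$ or $v=u^0$ — together with verifying that $u(t)$ never leaves $\tdV$, which again follows from $\phi(u(t))<\infty$ via \eqref{phi} and the norm equivalence \eqref{eq11}. None of these steps is deep; the content is entirely in correctly invoking the pieces already assembled in Sections \ref{sec2.1}–\ref{sec2.5}.
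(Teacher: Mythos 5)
There is a genuine gap in your argument for $u_t\in L^\8([0,T];H)$. You write that Theorem~\ref{th-vi} gives a locally Lipschitz curve, so $u_t\in L^\8_{\mathrm{loc}}([0,+\8);H)$, and that restricting to $[0,T]$ yields $u_t\in L^\8([0,T];H)$. But the ``locally Lipschitz'' conclusion coming from \cite[Theorem~4.0.4]{AGS} is on compact subsets of the \emph{open} half-line, and the Lipschitz constant in general blows up like $1/t$ as $t\to 0^+$ -- this is exactly what the regularizing estimate \eqref{asym}, namely $|\pd(\phi+\psi)|(u(t))\le\|u^0-\bar u\|_H/t$, encodes. Restriction to a closed interval $[0,T]$ does \emph{not} give a uniform bound up to $t=0$. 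Obtaining the uniform Lipschitz constant on all of $[0,T]$ is precisely where the hypothesis $\phi(u^0)<+\8$ must enter (you have it available but never invoke it in this step), and it is not automatic: the paper tests the EVI with $v=u(t_0)$, uses that $\phi(u(t_0))-\phi(u(t))\le\lg\xi,u(t_0)-u(t)\rg$ for any $\xi\in\pd\phi(u(t_0))$, selects the minimal-dual-norm element so that the bound involves $|\pd\phi|(u(t_0))$, and then crucially uses \eqref{Dphi-dec} with $v=u^0$ to conclude $|\pd\phi|(u(t_0))\le|\pd\phi|(u^0)$, uniformly in $t_0$. Only then does one obtain a $t_0$-independent Lipschitz constant and hence $\|u_t\|_{L^\8(0,T;L^2)}\le|\pd\phi|(u^0)$. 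Your proposal skips this argument entirely, so as written it does not establish the $L^\8$ bound on $u_t$ up to $t=0$.

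The remainder of your plan -- dropping $\psi$ via the energy-decrease inequality \eqref{phi-dec} so that $(\phi+\psi)(u(t))\le(\phi+\psi)(u^0)<+\8$ and thus $\psi(u(t))=0$; reading off $(\Delta u)^-\ll\L^d$ and $u(t)\in\TD$ directly from the definition \eqref{phi} once $\phi(u(t))<+\8$; the uniform $\mathcal M$-bound together with \eqref{eq11} giving $u\in L^\8([0,T];\TD)$; and rewriting the EVI as \eqref{vi35} via the Hilbert-space chain rule $\tfrac12\tfrac{\d}{\d t}\|u(t)-v\|^2=\lg u_t(t),u(t)-v\rg$ once differentiability a.e.\ is known -- is correct and matches the paper's route. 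You should also note, as the paper does, that one actually obtains the strict bound $\|\Delta u(t)\|_{\mathcal M}\le 2\phi(u^0)<C_*$ (not merely $\le C_*$); this is harmless for Corollary~\ref{cor10} itself but is the point of the $+1$ in \eqref{consC}, and is needed later when one perturbs $u$ by $\pm\vep\vph$ and wants the perturbation to stay inside $D(\psi)$. Finally, your closing remark that the main obstacle is the bookkeeping around $\psi$ is a misdiagnosis: that part is routine, whereas the uniform-in-time control of $u_t$ is where the real work lies.
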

{\red
The dual pair $\lg \cdot, \cdot \rg_{H',H}$ is the usual integration so we just use $\lg \cdot, \cdot \rg$ in the following article.}
 Recall the definition of $\phi$ in \eqref{phi}. $\phi(u^0)<+\8$ if and only if $u^0\in \tilde{V}$, $(\Delta u^0)^-\ll\mathcal{L}^d$ and $\int_\om e^{-(\Delta u^0)^+_\|+(\Delta u^0)^-}\d x<+\8.$
}
 \begin{proof}
 First,
we claim the functional $\psi$ can be taken off. Indeed, from \eqref{phi-dec} we have
\begin{equation}
(\phi+\psi)(u(t)) \le (\phi+\psi)(v)+\frac1{2t}\|v-u^0\|_H^2 \qquad \fal v\in \blue D(\phi+\psi).
\end{equation}
Then taking $v=u^0$ gives
\begin{equation}\label{phi0}
(\phi+\psi)(u(t))\le (\phi+\psi)(u^0)<+\8,
\end{equation}
which also implies
\begin{equation}\label{phi0n}
\phi(u(t))\le \phi(u^0)<+\8 \quad \text{  for a.e. }t\in[0,T].
\end{equation}
To make Section \ref{sec1.2} rigorous, notice $u\in \TD$ we have
$$\int_\om \vph\d(\Delta u  )=-\int_\om\gr u\cdot \gr \vph \d x \,\text{ for any }\vph\in W^{1,p}(\om).$$
Particularly, taking $\vph\equiv 1$ gives
   $\int_\om \d (\Delta u)=0$, so  we have
$$\|(\Delta u)^+\|_{\mathcal{M}(\om)}=\|(\Delta u)^-\|_{\mathcal{M}(\om)}=\frac12\|\Delta u\|_{\mathcal{M}(\om)}.$$
Since
\begin{equation*}
\|(\Delta u)^-\|_{L^1(\om)} = \int_\om
(\Delta u)^-\d x \le \int_\om e^{(\Delta u)^-}\d x \le \int_\om e^{-(\Delta u)_{\|}^++(\Delta u)^-}\d x= \phi(u) \le \phi(u^0)
\end{equation*}
we know
\begin{equation}\label{psi00}
 (\Delta u)^-\ll\L^d\quad \text{ for a.e. }t\in[0,T],\quad  \|\Delta u\|_{\mathcal{M}(\om)}\le 2\phi(u^0),
\end{equation}
so in Definition \eqref{psi}, we can just take
\begin{equation}\label{consC}
 C_*:=2\phi(u^0)+1
\end{equation}
 and
\begin{equation}\label{psi0}
\psi(u(t))\equiv 0\equiv \pd\psi(u(t)).
\end{equation}
{\blue The invariant ball introduced by $\psi$ is similar to the idea of a-priori assumption method in PDE. We first obtain the solution in some invariant ball $\|\Delta u\|_{\mathcal{M}}\leq C_*$, then we prove the invariant ball is not artificial by showing the solution truly locates within the ball $\|\Delta u\|_{\mathcal{M}}\leq C_*-1.$
Noticing also that if $v\in D(\psi)$, $\psi(v)=0$, so
we can rewrite EVI \eqref{0927-32} as
$$\frac12\frac\d{\d t}\|u(t)-v\|^2\le \phi(v)-\phi(u(t)), \quad \text{for a.e. } t>0,\,\forall v\in D(\phi+\psi).
$$}

Next, we need to show that $u_t\in L^\8(0,T;L^2(\om))$. From Theorem \ref{th-vi} we know that $t\mapsto u(t)$ is locally Lipschitz in
$(0,T)$, i.e. for any $t_0>0$ there exists $L=L(t_0)>0$ such that
$$\|u(t_0+\vep)-u(t_0)\|_{L^2(\om)}\le L(t_0)\vep \qquad \text{for all } {\blue \vep\in [0, T-t_0]}.$$
The key point is to obtain a uniform bound for $L(t_0)$ for arbitrary $t_0\ge 0$. Since $u(t)$ is the variational solution satisfying \eqref{vi},
 taking $v=u(t_0)$ in \eqref{vi} gives
\begin{align*}
\frac12&\frac\d{\d t}\|u(t_0)-u(t)\|_{L^2(\om)}^2 \le \phi(u(t_0))-\phi(u(t)) \le \lg \xi ,u(t_0)-u(t)\rg
\end{align*}
for any $\xi \in \pd \phi(u(t_0))$. In particular, by \cite[Proposition 1.4.4]{AGS}, we have
\begin{equation}\label{Q1}
|\pd \phi|(u(t_0))=\min \{\|\xi\|_{H'}; \xi\in\pd\phi(u(t_0))\}.
\end{equation}
Hence taking $\xi$ as the elements of minimal dual norm in $\pd \phi (u(t_0))$ implies
\begin{align*}
\frac12\frac\d{\d t}\|u(t_0)-u(t)\|_{L^2(\om)}^2 &\le\phi(u(t_0))-\phi(u(t))\\
& \le \| \xi\|_{L^2(\om)'} \|u(t_0)-u(t)\|_{L^2(\om)}\\
& \le |\pd \phi|(u(t_0))\|u(t_0)-u(t)\|_{L^2(\om)}.
\end{align*}
Furthermore,
since $t\mapsto \|u(t_0)-u(t)\|_{L^2(\om)}$ is locally Lipschitz, hence differentiable
for a.e. $t$, we have
\begin{equation}
\frac\d{\d t}\|u(t_0)-u(t)\|_{L^2(\om)}\le |\pd \phi|(u(t_0)) \le
|\pd \phi|(u^0) \qquad \text{for a.e. }t>0,
\label{lip}
\end{equation}
where we have used \eqref{Dphi-dec} in the last inequality. {\blue From \eqref{Q1}, $|\pd\phi|(u^0)$ is just the subdifferential of $\phi(u^0)=\int_\om e^{-(\Delta u^0)_\|}\d x$. We know if the Gateaux-derivative of $\phi(u^0)$ exists in some dense set of $D(\phi)$, then the subdifferential of $\phi(u^0)$ is single-valued. Therefore direct calculation gives $\partial \phi (u^0)=\Delta e^{-(\Delta u^0)_\|}$ and $|\pd \phi|(u^0)=\|\Delta e^{-(\Delta u^0)_\|}\|_{L^2(\om)}$.}
Thus the function $t\mapsto \|u(t_0)-u(t)\|_{L^2(\om)}$
is globally Lipschitz with Lipschitz constant less than $|\pd \phi|(u^0)$, which is independent of $t_0$.
From \cite[Theorem 1.17]{Barbu2010}, $u$ is differentiable a.e. in $[0,T]$ w.r.t $H$, and belongs to $W^{1,\infty}([0,T];H)$.
Hence we know
$$\left\|\frac{u(t_0)-u(t_0+\vep)}{\vep}\right\|_{L^2(\om )}\le  |\pd \phi|(u^0)$$
Thus for a.e. $t$ we have
\begin{equation*}
\dfrac{u(t+\vep)-u(t)}\vep \in L^2(\om),\qquad \left\|\dfrac{u(t+\vep)-u(t)}\vep\right\|_{L^2(\om)}\le |\pd \phi|(u^0),
\end{equation*}
and the sequence of difference quotients $\dfrac{u(t+\vep)-u(t)}\vep$ is uniformly bounded in $L^2(\om)$. Since $u$ is differentiable a.e. in $[0,T]$ and the derivative is unique, define $u_t(t):=\lim_{\vep\to 0}\dfrac{u(t+\vep)-u(t)}\vep$. Consequently,
 \begin{equation}\label{ut925}
 \|u_t\|_{L^\8(0,T;L^2(\om))}\le |\pd \phi|(u^0)=\|\Delta e^{-(\Delta u^0)_\|}\|_{L^2(\om)} .
 \end{equation}

 Finally, from
 $$\frac12\frac\d{\d t}\|u(t)-v\|_{L^2(\om)}^2=\lg u_t(t), u(t)-v\rg,$$
 we obtain \eqref{vi35}.
 \end{proof}

\subsection{Existence of strong solution}
After establishing the regularity of variational inequality solution in Section \ref{Sec3.1}, we start to prove the variational inequality solution is also a strong solution. We first clarify the definition of strong solution, which has a latent singularity.
\begin{defn}\label{defstrong}
Given initial datum $u^0 \in H$ such that $\phi(u^0)<+\8$, we call function
 $$u\in L^\8([0,T];\tdV )\cap C^0([0,T];H),\quad u_t\in L^\8([0,T];H)$$
 a strong solution to \eqref{maineq} if $u$ satisfies
 \begin{equation}\label{tt57}
         u_t=\Delta( e^{-(\Delta u)_{\|}})
        \end{equation}
        for a.e. $(t,h)\in[0,T]\times \I$, where $(\Delta u)_{\|}$ is the absolutely continuous part of $\Delta u$ in the decomposition \eqref{decom}.
\end{defn}
{\blue
\begin{remark}
The equation \eqref{tt57} holds for a.e. $(t,h)\in[0,T]\times \I$ in the sense that
\begin{equation}
 \int_\I \big[u_t(t) -\Delta e^{-(\Delta u(t))_\|} \big] \vph\d x =0\qquad \fal\vph\in C_c^\8(\I)
\end{equation}
for a.e. $t\in[0,T]$.
\end{remark}
}
Let $\vph\in C^{\8}_c(\I)$ be given. We prove the sub-differential of functional $\phi$ is single-valued {\blue along EVI solution $u$}.  The idea of proof is to test \eqref{vi35} with $v:=u\pm\vep \vph$ and then take limit as $\vep\to 0.$
Recall the space notation $H$ in \eqref{Hnote}
\begin{equation*}
H=\left\{u\in L^2(\om):\int_\om u\d x=0\right\}.
\end{equation*}
Let us state existence result for strong solution as follows.

\begin{thm}\label{mainth1}
  Given $T>0$, initial datum initial datum $u^0 \in H$ such that $\phi(u^0)<+\8$, then {\blue EVI} solution $u$ obtained in Corollary \ref{cor10} is  also a strong solution
  to \eqref{maineq}, i.e.,
        \begin{equation}\label{main21_3}
         u_t=\Delta( e^{-(\Delta u)_{\|}})
        \end{equation}
        for a.e. $(t,h)\in[0,T]\times \I$.
        Besides, we have
        $$\Delta( e^{-(\Delta u)_{\|}})\in L^\8([0,T];H)$$
         and the following two dissipation inequalities
        $$
         \phi(u(t))=\int_\I e^{-(\Delta u(t))_{\|}}\d x\leq \phi(u^0), \quad t\geq 0,
        $$
         \begin{equation}\label{dissiE}
           E(u(t)):=\frac{1}{2}\int_\I \big[\Delta( e^{-(\Delta u)_{\|}})\big]^2 \d x\leq E(u^0), \quad t\geq 0,
         \end{equation}
        where $(\Delta u)_{\|}$ is the absolutely continuous part of $\Delta u$ in the decomposition \eqref{decom}.
\end{thm}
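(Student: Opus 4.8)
The strategy is to upgrade the EVI \eqref{vi35} obtained in Corollary \ref{cor10} into the pointwise identity \eqref{main21_3} by testing the inequality against admissible perturbations of $u(t)$. Fix $t>0$ for which \eqref{vi35} holds and fix $\vph\in C_c^\8(\I)$; note $\vph$ may be normalized so that $\int_\om\vph\d x=0$, ensuring $u(t)\pm\vep\vph\in H$, and for $|\vep|$ small enough one checks $u(t)\pm\vep\vph\in D(\phi+\psi)$ since adding a smooth perturbation changes $(\Delta u)_\bot$ not at all, perturbs $(\Delta u)_\|$ by $\pm\vep\Delta\vph\in L^\8$, and keeps $(\Delta u)^-\ll\L^d$ and $\|\Delta u\|_{\mathcal M}$ within $C_*$. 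First I would substitute $v=u(t)+\vep\vph$ and $v=u(t)-\vep\vph$ into \eqref{vi35}, getting
\begin{align*}
\mp\vep\lg u_t(t),\vph\rg\le \phi(u(t)\pm\vep\vph)-\phi(u(t)).
\end{align*}
Dividing by $\vep>0$ and letting $\vep\downarrow0$ from both sides sandwiches $\lg u_t(t),\vph\rg$ between the two one-sided directional derivatives of $\phi$ at $u(t)$ in direction $\vph$; since $s\mapsto\int_\om e^{-((\Delta u)_\|+s\Delta\vph)}\d x$ is smooth in $s$ (dominated convergence, using the uniform $L^2$-type control on $e^{-(\Delta u)_\|}$ from \eqref{0612_1}-type estimates and that $\Delta\vph$ is bounded), both one-sided derivatives equal $-\int_\om e^{-(\Delta u)_\|}\Delta\vph\,\d x$. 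Hence
\begin{align*}
\lg u_t(t),\vph\rg=-\int_\om e^{-(\Delta u(t))_\|}\Delta\vph\,\d x=\int_\om \Delta\big(e^{-(\Delta u(t))_\|}\big)\vph\,\d x,
\end{align*}
where the last equality is an integration by parts justified by $e^{-(\Delta u)_\|}\in H^2(\om)$ (from \cite[Lemma 1]{LX} together with the bound \eqref{ut925} on $u_t=\Delta e^{-(\Delta u)_\|}$ in $L^2$) and the Neumann boundary condition on $e^{-(\Delta u)_\|}$; by density and arbitrariness of $\vph$ this yields \eqref{main21_3} for a.e.\ $(t,x)$. The membership $\Delta(e^{-(\Delta u)_\|})=u_t\in L^\8([0,T];H)$ is then immediate from \eqref{ut925}.

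For the two dissipation inequalities: the first, $\phi(u(t))\le\phi(u^0)$, is already recorded as \eqref{phi0n} (take $v=u^0$ in \eqref{phi-dec}), so I would just cite it. For \eqref{dissiE}, the natural route is to differentiate $E(u(t))=\tfrac12\|u_t(t)\|_H^2$ in time using the regularity just obtained. Formally, $\tfrac{\d}{\d t}E(u(t))=\lg u_t,u_{tt}\rg=\lg u_t,\Delta(e^{-(\Delta u)_\|})_t\rg=\lg\Delta u_t,(e^{-(\Delta u)_\|})_t\rg=-\int_\om e^{-(\Delta u)_\|}(\Delta u_t)^2\d x\le0$, exactly the formal computation in Section \ref{sec1.2}; rigorously one works at the level of the Euler scheme \eqref{E}, where $x_n^\tau$ solves an elliptic equation and the discrete analogue $E(x_n^\tau)\le E(x_{n-1}^\tau)$ can be proved, then passes to the limit using \eqref{tmp23} and lower semicontinuity of $v\mapsto\|\Delta(e^{-(\Delta v)_\|})\|_{L^2}^2$ along the convergence $u_n\to u(t)$. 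Alternatively one invokes the general energy-dissipation identity for curves of maximal slope \eqref{ms0427}, noting $\iota_\phi(u(t))=\|\Delta(e^{-(\Delta u(t))_\|})\|_H=\|u_t(t)\|_H$, so that $(\phi+\psi)(u(t))$ being non-increasing already encodes dissipation, but to get monotonicity of $E$ itself one still needs the second-order estimate.

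The main obstacle I expect is the rigorous justification of the time-differentiation yielding \eqref{dissiE}: $u$ is only known to be $W^{1,\8}([0,T];H)$, so $u_{tt}$ is not available, and the chain of integrations by parts in the formal computation requires controlling $\Delta u_t$, which has no a priori bound beyond what the scheme provides. The cleanest fix is to stay discrete — establish the monotonicity of the discrete energy $E(x_n^\tau)$ directly from the Euler-Lagrange equation of \eqref{E} (where the minimizer satisfies $\tfrac{1}{\tau}(x_n^\tau-x_{n-1}^\tau)=\Delta(e^{-(\Delta x_n^\tau)_\|})$ in a weak sense), sum a telescoping inequality, and then pass to the limit $\tau\to0$ invoking weak lower semicontinuity of the $L^2$ norm of $\Delta(e^{-(\Delta\cdot)_\|})$ along $u_n\to u$; the subtlety there is showing this last functional is weak-$*$ lower semicontinuous under $\Delta u_n\rhu\Delta u$, which should follow from $e^{-(\Delta u_n)_\|}\rightharpoonup e^{-(\Delta u)_\|}$ in $H^2$ (using Proposition \ref{newlsc}-type truncation arguments to handle the passage from measures to their absolutely continuous parts) plus weak lower semicontinuity of the $H^2$-seminorm.
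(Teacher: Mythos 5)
Your derivation of the identity $u_t=\Delta e^{-(\Delta u)_\|}$ follows essentially the same route as the paper: test \eqref{vi35} with $v=u(t)\pm\vep\vph$, $\vph\in C_c^\8(\I)$, and pass to the limit $\vep\to0$. One issue: you appeal to ``\eqref{0612_1}-type estimates'' and to $e^{-(\Delta u)_\|}\in H^2(\om)$ in order to justify the integration by parts $-\int e^{-(\Delta u)_\|}\Delta\vph = \int\Delta(e^{-(\Delta u)_\|})\vph$, but the paper explicitly cautions that \eqref{0612_1} is only formal (it relies on a boundary condition the weak solution is not known to satisfy), and you cannot use $u_t=\Delta e^{-(\Delta u)_\|}$ to derive the $H^2$ bound when that identity is precisely what you are in the middle of proving. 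Neither is needed: since $\vph$ is compactly supported, $\int e^{-(\Delta u)_\|}\Delta\vph\,\d x$ is by definition the distributional Laplacian of $e^{-(\Delta u)_\|}\in L^1$ paired with $\vph$, and the paper's limit passage uses only the elementary bound $1-e^x\le -x$ together with dominated convergence (the dominating function $\|\Delta\vph\|_\8\, e^{\|\Delta\vph\|_\8}\, e^{-(\Delta u)_\|}$ is in $L^1$ by \eqref{phi0n}). The conclusion $u_t=\Delta e^{-(\Delta u)_\|}$ in $C_c^\8(\I)'$ for a.e.\ $t$, hence pointwise a.e.\ by Radon--Nikodym, then holds without any $H^2$ regularity.

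The more substantive gap is in your treatment of the second dissipation inequality \eqref{dissiE}. You flag it as ``the main obstacle'' and sketch two heavy remedies (a discrete energy-monotonicity argument at the level of the Euler scheme, or a lower-semicontinuity argument for $v\mapsto\|\Delta e^{-(\Delta v)_\|}\|_{L^2}^2$ under weak-$*$ convergence of $\Delta u_n$). Neither is needed. Once \eqref{main21_3} holds, $\Delta e^{-(\Delta u(t))_\|}=u_t(t)$ a.e., and Corollary \ref{cor10} has already given the \emph{uniform-in-time} bound \eqref{ut925}, namely $\|u_t\|_{L^\8(0,T;L^2)}\le |\pd\phi|(u^0)=\|\Delta e^{-(\Delta u^0)_\|}\|_{L^2}$; this was obtained via \eqref{Dphi-dec} independently of the strong-solution identity. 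Therefore $E(u(t))=\tfrac12\|u_t(t)\|_H^2\le\tfrac12\|\Delta e^{-(\Delta u^0)_\|}\|_H^2=E(u^0)$ at once, and the membership $\Delta e^{-(\Delta u)_\|}\in L^\8([0,T];H)$ also falls out. The time-differentiation of $E$, the control of $\Delta u_t$, and the discrete energy argument you anticipate needing are all avoided: the monotone bound on $|\pd\phi|(u(t))$ is exactly what the abstract metric-gradient-flow machinery already delivered in Theorem \ref{th-vi}.
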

\begin{proof}

 Step 1. Integrability results.

First from \eqref{phi0n}, we know
\begin{equation}\label{L1}
e^{-(\Delta u(t))_{\|}}\in L^1(\I).
\end{equation}
Since $\vph\in C_c^{\8}(\om)$
 we also know
\begin{equation}
\label{well-defined}
e^{-(\Delta u(t))_\|-\vep \Delta \vph}\in L^1(\I)
\end{equation}
for all sufficiently small $\vep$.

{Step 2. Testing with $v=u(t)\pm\vep \vph$.}

{\blue First we show $v\in D(\phi+\psi)$. Since $\vph \in C_c^\8$, it is sufficient to show $v\in D(\psi)$ for $\vep$ small enough. Indeed, from \eqref{psi00} we know $\|\Delta u\|_{\mathcal{M}}\leq 2\phi(u^0)=C-1.$ Hence we choose $\vep$ small enough such that $\vep\leq \frac{1}{2\|\vph\|_{W^{2,\8}}}$, which implies $\|v\|_{\mathcal{M}}\leq 2\phi(u^0)+\frac{1}{2}< C$ and $\psi(v)=0.$ }

Plugging $v=u(t)+\vep \vph$ in \eqref{vi35} gives
\begin{equation}
\lg u_t(t),\vep\vph\rg+\phi(u(t)+\vep \vph)-\phi(u(t))\ge 0.
\label{9vi-2}
\end{equation}
Direct computation shows that
\begin{align*}
\phi(u(t)+\vep \vph)-\phi(u(t)) &= \int_\I\Big[e^{-(\Delta u(t))_\|-\vep\Delta\vph}-e^{-(\Delta u(t))_\|}\Big]\d x\\
&= \int_\I e^{-(\Delta u(t))_\|-\vep\Delta\vph}\Big(1-e^{\vep\Delta\vph}\Big)\d x\\
&\le -\int_\I e^{-(\Delta u(t))_\|-\vep\Delta\vph}\Big(\vep\Delta\vph\Big)\d x,
\end{align*}
where we used {\blue $1-e^x\leq -x$ for all $x\in \mathbb{R}.$}
This, together with \eqref{9vi-2}, gives
\begin{equation}
\lg u_t(t),\vep\vph\rg-\int_\I e^{-(\Delta u(t))_\|-\vep\Delta\vph}\Big(\vep\Delta\vph\Big)\d x\ge 0.\label{9vi-3}
\end{equation}
To take limit in \eqref{9vi-3}, we claim
\begin{align}
&\lim_{\vep\to 0}\int_\I e^{-(\Delta u(t))_\|-\vep\Delta\vph}\Delta \vph \d x =
\int_\I e^{-(\Delta u(t))_\|}\Delta\vph \d x.
\label{c-3}
\end{align}

{\em Proof of \eqref{c-3}.} First we have
$$ e^{-(\Delta u(t))_\|-\vep\Delta\vph} \Delta \vph \to e^{-(\Delta u(t))_\|} \Delta \vph  \quad  \text{ a.e. on }\om .$$
Then by \eqref{well-defined} we can see
$$\int_\om e^{-(\Delta u(t))_\|-\vep\Delta\vph} \Delta \vph \d x <+\8. $$
Thus by dominated convergence theorem we infer \eqref{c-3}.

\bigskip

Now we can divide by $\vep>0$ in \eqref{9vi-3} and take the limit $\vep\to0^+$ to obtain
\begin{align*}
&\lg u_t(t),\vph\rg-\lim_{\vep\to0^+} \int_\I e^{-(\Delta u(t))_\|-\vep\Delta\vph}\Delta\vph\d x \\
&=\lg u_t(t),\vph\rg -
 \int_\I e^{-(\Delta u(t))_\|}\Delta\vph \d x\geq 0.
\end{align*}
Repeating the above arguments with $v=u(t)-\vep \vph$ gives
\begin{equation*}
\lg u_t(t),\vph\rg -
 \int_\I e^{-(\Delta u(t))_\|}\Delta\vph \d x\le0.
\end{equation*}
Thus we finally have
\begin{equation}
 \int_\I \bigg[u_t(t)\vph -e^{-(\Delta u(t))_\|}\Delta\vph \bigg] \d x =0\qquad \fal\vph\in C_c^\8(\I).
\end{equation}

 Therefore
$u_t(t) -\Delta e^{-(\Delta u(t))_\|}=0$ in $C_c^\8(\I)'$. From the Radon-Nikodym theorem, we also know
$u_t =\Delta e^{-(\Delta u(t))_\|}$ for a.e. $(t,x)\in[0,T]\times\I.$

   Finally, we turn to verify \eqref{dissiE}.
    Combining \eqref{main21_3} and \eqref{ut925}, we have the dissipation
    {law}
  \begin{equation}\label{tE}
    E(u(t))=\frac{1}{2}\|u_t(t)\|_H^2=\frac{1}{2}\|\Delta e^{-(\Delta u(t))_\|}\|_H^2\leq \frac{1}{2}E(u^0),
  \end{equation}
   where $E(u(t))=\frac{1}{2}\int_\I \big[\Delta e^{-(\Delta u(t))_\|}\big]^2 \d x$ defined in \eqref{dissiE}.
  Hence the dissipation inequality \eqref{dissiE} holds and we completes the proof of Theorem \ref{mainth1}.
\end{proof}

\section*{Acknowledgment}
We would like to thank the support by the National Science Foundation under Grant No. DMS-1812573 and KI-Net RNMS11-07444.
XYL was partially supported by Lakehead University fundings 10-50-16422410 and
10-50-16422409, and NSERC Discovery Grant 10-50-16420120. Part of this work was done when XYL was a postdoc
at McGill University.

\end{document}